\begin{document}
\newtheorem{definition}{Definition}[section]
\newtheorem{theorem}{Theorem}[section]
\newtheorem{lemma}[theorem]{Lemma}
\newtheorem{example}[theorem]{Example}
\newtheorem{remark}[theorem]{Remark}
\newtheorem{corollary}[theorem]{Corollary}
\newtheorem{proposition}[theorem]{Proposition}

\title{The net-regular strongly regular signed graphs with degree 5}

\author{Qian Yu\thanks{1779382456@qq.com}, Yaoping Hou\thanks{Corresponding author: yphou@hunnu.edu.cn}\\
{\small MOE-LCSM, CHP-LCOCS, School of Mathematics and Statistics}\\{\small Hunan Normal University, Changsha, Hunan 410081, P. R. China}
}

\date{}
\maketitle

\begin{abstract}
In this paper,  we determine all connected  net-regular strongly regular signed graphs with degree 5. There are five and two strongly regular signed graphs with net-degree 3 and 1, respectively.
\end{abstract}

\noindent
{\bf Keywords:}
 Signed graphs, Strong regularity, Net regularity \\[2mm]
\textbf{AMS classification}:  05C22, 05E30

\section{Introduction}

Let $G$ be a simple connected graph with vertex set $V(G)$ and edge set $E(G)$. A \emph{signed graph} $\dot{G}=(G,\sigma)$ is an unsigned graph $G$ together with a sign function $\sigma :E(G)\rightarrow\{+1,-1\}$. The \emph{adjacency matrix} of $\dot{G}$ is denoted by $A_{\dot{G}}=(a_{ij}^\sigma)$ where $a_{ij}^\sigma=\sigma(v_iv_j)$ if $v_i\sim v_j$, and 0 otherwise. And the eigenvalues of $\dot{G}$ are the eigenvalues of $A_{\dot{G}}$. Let $N(v)$ denote the set of neighbours of $v$ and  $N(u)\bigcap N(v)$ denote the set of common neighbours of $u$ and $v$. The \emph{degree} $d(v)$ of a vertex $v$ in a signed graph $\dot{G}$ is its degree in the corresponding underlying graph.  The \emph{positive degree} $d^{+}(v)$ and \emph{negative degree} $d^{-}(v)$ of $v$ are the number of positive and negative edges incident with $v$, respectively. And we use $v_i\overset{+}{\sim}v_j$ and $v_i\overset{-}{\sim}v_j$ to denote that vertices $v_i, v_j$ are joined by a positive edge and a negative edge, respectively. A signed graph is said to be \emph{$r$-regular}  if its every vertex has the same degree $r$ and \emph{$\rho$ net-regular}  if its every vertex has the same net-degree $\rho$. A signed graph $\dot{G}$ is called \emph{homogeneous} if all its edges have the same sign. Otherwise, it is \emph{inhomogeneous}. We use $G^+$ and  $G^-$ to denote the subgraphs of ${G}$ induced by the positive edges and negative edges of $\dot{G}$, respectively, in this paper. And $-\dot{G}$ denotes the \emph{negation} of $\dot{G}$.

A walk of length $n$ in a signed graph $\dot{G}$ is a sequence of vertices $v_0v_1\cdots v_n$ such that $v_{i-1}$ is adjacent to $v_i$ for each $i=1,2,\dots,n$. And a walk is positive if $\sigma(v_0v_1)\sigma(v_1v_2)\cdots\sigma(v_{k-1}v_k)=1$ and negative if $\sigma(v_0v_1)\sigma(v_1v_2)\cdots\sigma(v_{k-1}v_k)=-1$. A cycle of a signed graph is a closed walk with distinct vertices. And the sign of this cycle is the product of the sign of its edges. A signed graph is \emph{balanced} if all its cycles are positive. Otherwise, it is  \emph{unbalanced}.

We have known that a strongly regular graph with parameters $(n,r,e,f)$ is an $r$-regular graph on $n$ vertices in which any two adjacent vertices have exactly $e$ common neighbours and any two non-adjacent vertices have exactly $f$ common neighbours \cite{CRS}. And the parameters of a strongly regular graph satisfy that
\begin{equation}\label{srg}
   r(r-e-1)=(n-r-1)f.
\end{equation}

 Regarding  the strongly regularity of signed graphs, Zaslavsky introduced the concept of the very strongly regular signed graphs in \cite{Z}. Moreover, Ramezani developed another kind of concept of signed strongly regular graphs and  constructed several families of signed strongly regular graphs with only two distinct eigenvalues by using the star complement technique in \cite{R}. But in this paper, we consider the strongly regular signed graphs defined by Stani\'c in \cite{S1}.

By \cite{S1}, a signed graph $\dot{G}$ is called strongly regular if it is neither homogeneous complete nor edgeless, and there exist $r\in \mathbb{N}$, $a, b ,c\in\mathbb{Z}$, such that the entries of $A^2_{\dot{G}}$ satisfy

\begin{equation*}
  a^{(2)}_{ij}=
\left\{
\begin{aligned}
r, & \quad \text{if $v_i=v_j$}; \\
a, & \quad \text{if $v_i\overset{+}{\sim}v_j$}; \\
b, & \quad \text{if $v_i\overset{-}{\sim}v_j$}; \\
c, & \quad \text{if $v_i\neq v_j$ and $v_i\nsim v_j$}.
\end{aligned}
\right.
\end{equation*}
By the definition, a signed graph $\dot{G}$ is strongly regular if and only if its adjacency matrix satisfies that
\begin{equation}\label{def}
  A^2_{\dot{G}}+\frac{b-a}{2}A_{\dot{G}}=\frac{a+b}{2}A_G+cA_{\overline{G}}+rI=\frac{a+b-2c}{2}A_G+cJ+(r-c)I.
\end{equation}
where $\overline{G}$ is the complement of $G$, and $I$, $J$ are the identity matrix and the all-1 matrix, respectively.

If the strongly regular signed graph $\dot{G}$ is $\rho$ net-regular, then the all-1 vector $\mathbf{j}$ is the eigenvector of  $\dot{G}$, $G$ and $\overline{G}$. So we have
\begin{equation}\label{abcrn}
  \rho^2+\frac{b-a}{2}\rho=\frac{a+b}{2}r+c(n-r-1)+r.
\end{equation}
by equation \eqref{def}.

In \cite{KS}, Koledin and Stani\'{c} partitioned all inhomogeneous strongly regular signed graphs (for short, we write SRSGs) into the following five disjoint classes:

$\mathcal{C}_1$: SRSGs with $a=-b$, which are either complete or non-complete with $c\neq0$.

$\mathcal{C}_2$: SRSGs with $a=-b$, which are non-complete with $c=0$.

$\mathcal{C}_3$: SRSGs with $a\neq-b$, which are either complete or non-complete with $c=\frac{a+b} {2}$ .

$\mathcal{C}_4$: SRSGs with  $a\neq-b$, which are non-complete with $c=0$.

$\mathcal{C}_5$: SRSGs with  $a\neq-b$, which are non-complete with $c\neq\frac{a+b} {2}$ and $c\neq0$.

There are some results about this kind of strongly regular signed graphs. Stani\'c  established the certain structural and spectral properties of such strongly regular signed graphs and characterized the bipartite SRSGs and the SRSGs with 4 eigenvalues \cite{S1}. In \cite{KS}, Koledin and Stani\'{c} studied the SRSGs in class $\mathcal{C}_3$.  An\dj eli\'{c},   Koledin and Stani\'c considered  relationships between net-regular, strongly regular and walk-regular signed graphs in \cite{AKS} and they determined the net-regular strongly regular signed graphs with degree at most 4 in \cite{AKS1}.

 In this paper, we concentrate on the connected net-regular strongly regular signed graphs with degree $5$.  In Section 2, we give some known results for strongly regular signed graphs.  And all net-regular SRSGs with degree 5 are presented in Section 3.

 It is worth noting that the positive and negative edges of signed graphs in figures are represented by solid and dashed lines, respectively, in this paper.

\section{Preliminaries}

The following two known results will be used in the sequel. The first one examines the relationship between parameters $a$ and $b$ in an inhomogeneous strongly regular signed graph and its negation. The second one characterizes the number of negative walks of length 2 between two vertices in a connected non-complete net-regular strongly regular signed graph.

\begin{lemma}\cite{KS}\label{le1}
 If $\dot{G}$ is a SRSG belonging to $\mathcal{C}_i$, $1\leq i\leq 5$, so is $-\dot{G}$. The parameters $a$ and $b$ of an inhomogeneous SRSG $\dot{G}$ interchange their roles in the set of parameters of $-\dot{G}$.
\end{lemma}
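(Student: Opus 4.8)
The plan is to exploit the single identity $A_{-\dot{G}} = -A_{\dot{G}}$, which holds because negation flips the sign of every edge. Squaring gives
\begin{equation*}
  A^2_{-\dot{G}} = (-A_{\dot{G}})^2 = A^2_{\dot{G}},
\end{equation*}
so $\dot{G}$ and $-\dot{G}$ have \emph{identical} squared adjacency matrices. The first thing I would read off from this is that the diagonal entries are unchanged, so the parameter $r$ is preserved, and that on each off-diagonal position the numerical value $a^{(2)}_{ij}$ does not move.

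Next I would track how the edge labels themselves move. A pair $v_i, v_j$ joined by a positive edge in $-\dot{G}$ is joined by a negative edge in $\dot{G}$, and vice versa, while non-adjacent pairs stay non-adjacent (the underlying graph $G$ is unchanged by negation). Combining this with the fact that the off-diagonal entries of $A^2$ are fixed, the defining parameters of $-\dot{G}$ become $(r, b, a, c)$: the value attached to the positive edges of $-\dot{G}$ is the old $b$, the value on its negative edges is the old $a$, and the value on non-adjacent pairs is still $c$. This establishes the interchange of $a$ and $b$ asserted in the second sentence. I would also note in passing that $-\dot{G}$ is genuinely an SRSG, i.e.\ neither edgeless nor homogeneous complete, since negation preserves both the underlying graph and the property of being (in)homogeneous.

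Finally I would verify that $-\dot{G}$ remains in the same class $\mathcal{C}_i$. Writing the new parameters as $a' = b$, $b' = a$, $c' = c$, the key point is that every condition defining the five classes is symmetric under the swap $a \leftrightarrow b$: one has $a' = -b'$ exactly when $a = -b$; one has $\frac{a'+b'}{2} = \frac{a+b}{2}$, so $c' = \frac{a'+b'}{2}$ exactly when $c = \frac{a+b}{2}$; the conditions $c' = 0$ and $c' \neq 0$ transfer verbatim; and completeness of $G$ is unaffected by negation. Checking these against the definitions of $\mathcal{C}_1,\dots,\mathcal{C}_5$ shows that membership is preserved in each case. There is no real obstacle here, as the argument is purely structural; the only thing requiring care is to run through all five class definitions and confirm that each defining equality or inequality is invariant under swapping $a$ and $b$.
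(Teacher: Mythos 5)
Your argument is correct: the identity $A_{-\dot{G}}=-A_{\dot{G}}$ gives $A^2_{-\dot{G}}=A^2_{\dot{G}}$, the positive and negative edge positions swap under negation (so the parameters become $(r,b,a,c)$), and every condition defining $\mathcal{C}_1,\dots,\mathcal{C}_5$ is invariant under $a\leftrightarrow b$ since $a=-b$ and $\frac{a+b}{2}$ are symmetric and $c$, completeness, and (in)homogeneity are preserved. Note that the paper does not prove this lemma at all --- it is quoted from the reference [KS] --- so there is no in-text proof to compare against; your derivation is the standard one and fills that gap correctly.
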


\begin{lemma}\cite{AKS}\label{le2}
  Let $\dot{G}$ be a connected non-complete net-regular SRSG belonging to $\mathcal{C}_1\cup \mathcal{C}_4\cup \mathcal{C}_5$. Then for any two vertices $v_i$ and $v_j$ of $\dot{G}$, the number of negative walks of length 2 between $v_i$ and $v_j$ is even. Moreover, if this number is $2k$, $k\in \mathbb{N}$, then there are exactly $k$ common neighbours of $v_i$ and $v_j$ that are joined to $v_i$ by a positive edge and to $v_j$ by a negative edge, and exactly $k$ common neighbours that are joined to $v_i$ by a negative edge and to $v_j$ by a positive edge.
\end{lemma}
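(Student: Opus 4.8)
The plan is to convert the purely combinatorial assertion into a single matrix identity and then read that identity off the defining relation \eqref{def}. Write $A^+$ and $A^-$ for the adjacency matrices of $G^+$ and $G^-$, so that $A_G = A^+ + A^-$ and $A_{\dot G} = A^+ - A^-$. For a fixed pair $v_i, v_j$, a walk $v_i w v_j$ of length $2$ is negative exactly when one of its two edges is positive and the other negative; hence the number of negative walks of length $2$ equals $(A^+A^-)_{ij} + (A^-A^+)_{ij}$, where $(A^+A^-)_{ij}$ counts the common neighbours joined to $v_i$ by a positive edge and to $v_j$ by a negative edge, and $(A^-A^+)_{ij}$ counts those joined to $v_i$ negatively and to $v_j$ positively. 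Thus \emph{both} assertions of the lemma follow at once from the single matrix identity $A^+A^- = A^-A^+$: it forces $(A^+A^-)_{ij} = (A^-A^+)_{ij} =: k$, so the two kinds of mixed common neighbours occur equally often and the total number of negative walks is $2k$.

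Next I would reduce $A^+A^- = A^-A^+$ to a commutation statement about the two natural matrices. Using $A^+ = \tfrac12(A_G + A_{\dot G})$ and $A^- = \tfrac12(A_G - A_{\dot G})$, a direct expansion gives
\begin{equation*}
A^+A^- - A^-A^+ = \tfrac12\bigl(A_{\dot G}A_G - A_G A_{\dot G}\bigr),
\end{equation*}
so it suffices to prove that $A_{\dot G}$ and $A_G$ commute.

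To obtain this commutation I would solve \eqref{def} for $A_G$, rewriting it as
\begin{equation*}
\frac{a+b-2c}{2}\,A_G = A^2_{\dot G} + \frac{b-a}{2}\,A_{\dot G} - cJ - (r-c)I.
\end{equation*}
The coefficient $\tfrac{a+b-2c}{2}$ vanishes precisely in the classes $\mathcal C_2$ (where $a=-b$ and $c=0$) and $\mathcal C_3$ (where $c=\tfrac{a+b}{2}$); a short case check shows it is nonzero throughout $\mathcal C_1\cup\mathcal C_4\cup\mathcal C_5$ (for $\mathcal C_1$ it equals $-c\neq0$, for $\mathcal C_4$ it equals $\tfrac{a+b}{2}\neq0$, and for $\mathcal C_5$ it equals $\tfrac{a+b}{2}-c\neq0$). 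Hence $A_G$ is a linear combination of $A^2_{\dot G}$, $A_{\dot G}$, $J$ and $I$. Each summand commutes with $A_{\dot G}$: this is immediate for $A^2_{\dot G}$, $A_{\dot G}$ and $I$, while net-regularity gives $A_{\dot G}\mathbf j = \rho\mathbf j$, and since $A_{\dot G}$ is symmetric its row and column sums coincide, so $A_{\dot G}J = JA_{\dot G} = \rho J$. Therefore $A_{\dot G}A_G = A_G A_{\dot G}$, which yields $A^+A^- = A^-A^+$ and completes the proof.

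The only genuine subtlety — and the step I would treat most carefully — is the verification that $\tfrac{a+b-2c}{2}\neq 0$ exactly on $\mathcal C_1\cup\mathcal C_4\cup\mathcal C_5$; this is what both confines the statement to these three classes and reveals net-regularity (through $A_{\dot G}J = \rho J$) as the essential hypothesis, since without it $J$ would fail to commute with $A_{\dot G}$ and the argument would collapse.
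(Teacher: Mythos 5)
Your argument is correct. Note, however, that the paper does not prove this lemma at all: it is imported verbatim from the cited reference [AKS], so there is no in-paper proof to compare against. Your reconstruction is the natural one and, as far as I can tell, matches the mechanism behind the original result: the identity $A^{+}A^{-}-A^{-}A^{+}=\tfrac12\bigl(A_{\dot G}A_{G}-A_{G}A_{\dot G}\bigr)$ is verified correctly, the reading of $(A^{+}A^{-})_{ij}$ and $(A^{-}A^{+})_{ij}$ as the two kinds of mixed common neighbours is right, and solving \eqref{def} for $A_{G}$ is legitimate precisely because the coefficient $\tfrac{a+b-2c}{2}$ equals $-c$, $\tfrac{a+b}{2}$, and $\tfrac{a+b}{2}-c$ on the non-complete members of $\mathcal{C}_1$, $\mathcal{C}_4$, $\mathcal{C}_5$ respectively, hence is nonzero there; you correctly isolate net-regularity as the hypothesis that makes $J$ commute with $A_{\dot G}$ via $A_{\dot G}J=JA_{\dot G}=\rho J$. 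The only cosmetic remark is that connectedness plays no role in your argument (net-regularity alone gives $A_{\dot G}\mathbf{j}=\rho\mathbf{j}$), which is fine --- the hypothesis is simply not needed for this particular statement.
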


\section{The net-regular SRSGs with degree 5}

 In this section, we consider the net-regular SRSGs with degree 5. And we consider the positive net-degree by Lemma \ref{le1}, then there are two  possibilities for the net-degree: 3 and 1.

\subsection{3 net-regular SRSGs with degree 5}

Let $\dot{G}$ be a connected 5-regular and 3 net-regular SRSG, then  $G^+$ is 4-regular and $G^-$ is 1-regular. Therefore, $G^-$ induces a perfect matching of $\dot{G}$ and then the order of $\dot{G}$ is even.

If $\dot{G}\in \mathcal{C}_2$ or $\dot{G}$ is a complete signed graph in $\mathcal{C}_1$, then we have $A^2_{\dot{G}}+bA_{\dot{G}}=5I$ by equation \eqref{def}. It deduces that $\dot{G}$ has two eigenvalues: net-degree 3 and $\frac{-5}{3}$. This contradicts that $b$ is an integer.

We have known from \cite{KS} that every SRSG of $\mathcal{C}_3$ with degree $r\leq6$ must be complete. Then $G$ is isomorphic to $K_6$ if $\dot{G}\in \mathcal{C}_3$. It is easy to see that $\dot{G}$ is  isomorphic to the signed complete graph $\dot{G_1}$ (see in Figure \ref{G1}) with parameters  $(6,5,0,4)$.
\begin{figure}[h]
  \centering
  \includegraphics[width=5cm]{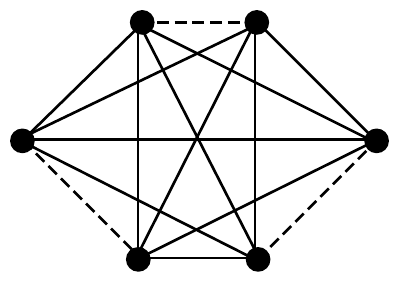}
    \caption{$\dot{G_1}$}\label{G1}
\end{figure}

Now, we need to consider the connected non-complete SRSGs in $\mathcal{C}_1\bigcup\mathcal{C}_4\bigcup\mathcal{C}_5$. According to equation \eqref{abcrn}, we have

\begin{equation}\label{abcn}
  -4a-b=c(n-6)-4.
\end{equation}

We concern that $\dot{G}$ contains an unbalanced triangle firstly. Since the vertex negative degree is 1, the  unbalanced triangle must have only one negative edge.
\begin{lemma}\label{UBT-5-3}
Let $\dot{G}$ be a connected non-complete 5-regular and 3 net-regular SRSG in  $\mathcal{C} _1\bigcup\mathcal{C}_4\bigcup\mathcal{C}_5$. If $\dot{G}$ contains an unbalanced triangle, then $\dot{G}$ is  isomorphic to $\dot{S}^1_8$ with parameters  $(8,5,-2,4,4)$ or $\dot{S}^1_{10}$  with parameters $(10,5,-2,4,2)$.
\end{lemma}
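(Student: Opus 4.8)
The plan is to first fix the local structure around the unbalanced triangle, then force the parameters, and finally recognise $\dot{G}$ as a cyclic blow-up. Net-regularity gives $d^{+}(v)=4$ and $d^{-}(v)=1$ at every vertex, so $G^{-}$ is a perfect matching; write $v'$ for the partner of $v$. Up to relabelling the triangle is $u\overset{-}{\sim}v$, $u\overset{+}{\sim}w$, $v\overset{+}{\sim}w$ with $v=u'$. For any positive edge $xy$, the only common neighbours that can produce a negative walk of length $2$ are the partners $x'$ and $y'$ (every other common neighbour joins $x$ and $y$ positively), so $a^{(2)}_{xy}=|N(x)\cap N(y)|-2t$, where $t\in\{0,1,2\}$ counts how many of $x',y'$ lie in $N(x)\cap N(y)$; Lemma \ref{le2} forces the number of negative walks to be even, hence $t\in\{0,2\}$. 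Applying this to the three triangle edges gives $t=2$ on each, which places $w'$ positively adjacent to both $u$ and $v$ and forces $\{u,v,w,w'\}$ to induce a $K_4$ whose two negative edges are the matching edges $uv$ and $ww'$.

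Next I would pin down $a,b,c,n$. For a negative edge $p\overset{-}{\sim}p'$ all common neighbours join positively on both sides, so $b=|N(p)\cap N(p')|\in\{2,3,4\}$, while the triangle edge gives $a=|N(u)\cap N(w)|-4\in\{-2,-1,0\}$. The key dichotomy is whether some positive edge has $t=0$. If one does, constancy of $a$ against the core edges (which have $t=2$ and at most $4$ common neighbours) forces $a=0$, and then \eqref{abcn}, i.e.\ $-4a-b=c(n-6)-4$, becomes $c(n-6)=4-b$; checking $b\in\{2,3,4\}$ against $n$ even and the class definitions yields a contradiction in each case ($b=3$ makes $n$ odd; $b=2$ forces $c=1=\frac{a+b}{2}$, i.e.\ the complete class $\mathcal{C}_3$; and $b=4$ makes $p,p'$ \emph{twins}, which rules out any $t=0$ edge and forces $a=-2$). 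Hence $t=2$ on every positive edge, so the two endpoints of each matching edge have identical neighbourhoods and the neighbours of a matched pair again split into matched pairs. For the resulting non-complete structure $a=|N(u)\cap N(w)|-4=2-4=-2$ and $b=4$, so \eqref{abcn} reduces to $c(n-6)=8$; integrality with $n$ even, $c\neq0$ and $c\neq\frac{a+b}{2}=1$ leaves exactly $(c,n)=(4,8)$ and $(c,n)=(2,10)$.

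Finally I would reconstruct $\dot{G}$ and identify the two graphs. Since the four positive neighbours of every vertex close up into two matched pairs that are fully joined to it, contracting each matching edge yields a connected $2$-regular quotient, that is a cycle $C_{n/2}$, and $\dot{G}$ is the signed \emph{blow-up} in which each quotient vertex is a negative edge and adjacent quotient vertices are joined by all four positive edges. Computing the common neighbours of two non-adjacent vertices in this blow-up confirms that $c$ is constant exactly when $n/2\in\{4,5\}$ (for $n/2\geq 6$ non-adjacent pairs occur at different distances and give different values), producing $c=4$ for $C_4$ and $c=2$ for $C_5$; these are precisely $\dot{S}^1_8$ with $(8,5,-2,4,4)$ and $\dot{S}^1_{10}$ with $(10,5,-2,4,2)$. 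The main obstacle is the parity dichotomy of the middle step: everything hinges on proving that no positive edge can carry $t=0$, since only then is the twin/blow-up structure available, and excluding $t=0$ requires combining Lemma \ref{le2} with \eqref{abcn}, the parity of $n$, and the precise conditions separating $\mathcal{C}_3$, $\mathcal{C}_4$ and $\mathcal{C}_5$. Once that structure is in hand, the identification of the two graphs is routine.
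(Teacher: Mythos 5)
Your proof is correct, but it is organized quite differently from the paper's. The paper, after establishing the same $K_4$ core around the unbalanced triangle, splits on the value of $a\in\{0,-1,-2\}$ and disposes of $a=0$ and $a=-1$ by separate edge-by-edge forcing arguments, then in the case $a=-2$, $b=4$ builds a ``basic structure'' and explicitly constructs the three candidates of order $8$, $10$ and $14$, checking by hand that the order-$14$ graph fails to be strongly regular. You instead case on the parity invariant $t\in\{0,2\}$ (the number of negative $2$-walks along a positive edge, which by the perfect-matching structure of $G^-$ can only come from the two matching partners): the branch with a $t=0$ edge forces $a=0$ and dies by parameter arithmetic from \eqref{abcn} together with the parity of $n$ and the class definitions, while the $t=2$-everywhere branch yields the clean structural conclusion that matched pairs are twins and $\dot{G}$ is a cycle blow-up, from which $a=-2$, $b=4$ and the identification of $\dot{S}^1_8$ and $\dot{S}^1_{10}$ follow; notably $a=-1$ never needs separate treatment, and $n=14$ is excluded conceptually because $c=1=\frac{a+b}{2}$ would place the graph in $\mathcal{C}_3$, outside the hypothesis, rather than by constructing $\dot{S}_{14}$. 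Your route is more structural and arguably cleaner, at the cost of leaning on the disjointness of the classes and leaving a few steps as routine (the fact that $|N(u)\cap N(w)|=4$ forces $K_6$ in the twin branch, and the final computation of $c$ in the blow-ups); also the parenthetical ``and forces $a=-2$'' in the $b=4$ subcase is loosely worded, since within that branch you have assumed $a=0$ and the correct conclusion is simply a contradiction, but this does not affect the validity of the argument.
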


\begin{proof}
Suppose that vertices $v_i,v_j,v_k$, with $v_i\overset{+}{\sim}v_j,v_k$ and $v_j\overset{-}{\sim}v_k$, form an unbalanced triangle in  $\dot{G}$. By Lemma \ref{le2}, there is a common neighbour $v_l$  of $v_i$ and $v_j$ such that  $v_l\overset{-} {\sim}v_i$ and $v_l\overset{+}{\sim}v_j$. We consider edge $v_iv_k$, then there is also one common neighbour of $v_i$ and $v_k$  such that it is negatively adjacent to  $v_i$ and positively adjacent to $v_k$. Then we have $v_k\overset{+}{\sim}v_l$ since $v_i$ has only one negative neighbour. Suppose $v_m$ and $v_s$ are the remaining positive neighbours of $v_i$. Since there are two negative walks of length 2 between $v_i$ and $v_j$, the parameter $a\in\{ 0, -1, -2\}$ by $d^{-}(v_i)=d^{-}(v_j)=1$.

If $a=0$, then $v_m\overset{+}{\sim}v_j$ and $v_s\overset{+}{\sim}v_j$. Consider the positive edge $v_iv_k$,  we arrive at $v_k\overset{+}{\sim}v_m,v_s$.  And we have $v_m\overset{+}{\sim}v_l$, $v_m\overset{-}{\sim}v_s$ and $v_s\overset{+}{\sim}v_l$ by considering the positive edges $v_iv_m$ and $v_iv_s$ in turn. Now the underlying graph is a complete graph. A contradiction. So $a\neq 0$.

If $a=-1$, then one of $v_m,v_s$ is positively adjacent to $v_j$. We suppose that $v_m\overset{+}{\sim}v_j$ without losing generality.  Concern the positive edge $v_iv_k$, then we get that one of $v_m,v_s$ is positively adjacent to $v_k$. If $v_k\overset{+}{\sim}v_m$, there will be two positive walks of length 2 between $v_i$ and $v_m$. A contradiction. Therefore, $v_k\overset{+} {\sim}v_s$. Then we get that $v_m\overset{+}{\sim}v_l$ and $v_m\overset{-}{\sim}v_s$ by considering the positive edge $v_iv_m$. But there are two positive walks of length 2 between  $v_m$ and $v_j$, which contradicts $a=-1$. Therefore, $a\neq -1$.

If $a=-2$, then $v_m$ and $v_s$ are not adjacent to $v_j$.  Consider the positive edge $v_iv_m$, we obtain that $v_m\overset{+}{\sim}v_l$ and $v_m\overset{-}{\sim}v_s$. And we have $v_s\overset{+}{\sim}v_l$ for the positive edge $v_iv_s$. In this situation, we get $b=4$ by the negative edge $v_iv_l$. To satisfy the condition $b=4$, there must be two common neighbours of $v_k$ and $v_j$ which are positively adjacent to $v_k$ and $v_j$ and they are negatively adjacent to each other by $a=-2$. So do the vertices $v_m$ and $v_s$. Then there is a basic structure in $\dot{G}$ as shown in Figure \ref{b=4}. If $a=-2$ and $b=4$, we have $c(n-6)=8$ by equation \eqref{abcn}. So the possible parameters are $(8,5,-2,4,4)$, $(10,5,-2,4,2)$ and $(14,5,-2,4,1)$. As shown in Figure \ref{8,10}, we get the corresponding signed graphs of order 8, 10 and 14 by the  basic structure.   It is easy to see that $\dot{S}_{14}$ is not a SRSG and $\dot{S}^1_8$, $\dot{S}^1_{10}$ are SRSGs with parameters  $(8,5,-2,4,4)$ and $(10,5,-2,4,2)$, respectively.
\end{proof}
\begin{figure}
  \centering
  \includegraphics[width=5cm]{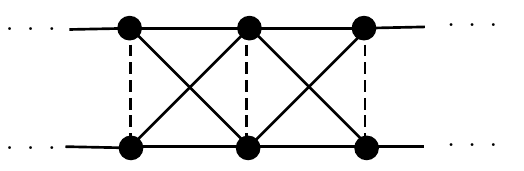}
  \caption{The basic structure of $(a,b)=(-2,4)$ in proof of Lemma \ref{UBT-5-3}}\label{b=4}
\end{figure}
\begin{figure}
  \centering
    \subfigure[$\dot{S}^1_8$]{
  \includegraphics[width=5.2CM]{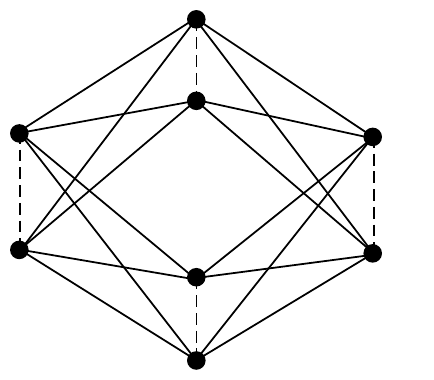}}
    \subfigure[ $\dot{S}^1_{10}$]{
  \includegraphics[width=4.5CM]{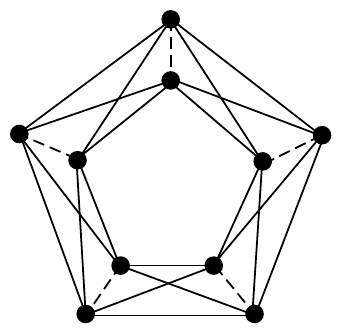}}
  \subfigure[ $\dot{S}^1_{14}$]{
  \includegraphics[width=4.5CM]{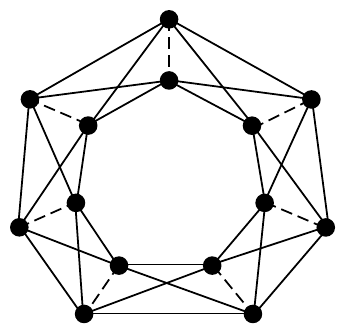}}
  \centering
  \caption{$\dot{S}^1_8$, $\dot{S}^1_{10}$ and  $\dot{S}_{14}$}\label{8,10}
\end{figure}

Now we consider that $\dot{G}$ does not contain  unbalanced triangles. Since $d^-(v)=1$ for every  $v\in V(\dot{G})$, we have $b=0$ by the Lemma \ref{le2} in this case. And the possible values of $a$ are  $0,1,2,3$.

We have $c(6-n)=4(a-1)$ by equation \eqref{abcn} and $b=0$. Then we get that the possible parameters are:
(1) $(10,5,0,0,1)$,  (2) $(8,5,0,0,2)$,  (3) $(n,5,1,0,0)$,  (4) $(10,5,2,0,-1)$,  (5) $(8,5,2,0,-2)$,  (6) $(14,5,3,0,-1)$,   (7) $(10,5,3,0,-2)$,  (8) $(8,5,3,0,-4)$
according to $n$ is even and $\dot{G}$ is non-complete and 5-regular.
%

Since $\dot{G}$ does not contain unbalanced triangles and $d^-(v)=1$ for every $v\in V(\dot{G})$, $a=0$ implies that two positively adjacent vertices in  $\dot{G}$ don't have  common neighbours. Therefore, two adjacent vertices in the underlying graph of $\dot{G}$ with parameters $(10,5,0,0,1)$ don't have common neighbours and two non-adjacent vertices have 1 or 5 common neighbours. There are sixty 5-regular graphs with $n=10$ (Figure \ref{10-5}) and only one graph, i.e. $K_{5,5}$, satisfies these two conditions. Then we can easily get that the corresponding SRSG is the signed graph $\dot{S}^2_{10}$ (Figure \ref{S10-1}). And two non-adjacent vertices in the underlying graph of  $\dot{G}$ with parameters $(10,5,2,0,-1)$ have three common neighbours.  But all of the 5-regular graphs with $n=10$ cannot satisfy this condition through simple verification.
If  $\dot{G}$ has parameters $(8,5,2,0,-2)$, then there exist two adjacent vertices in the underlying graph have no common neighbours.  There are three 5-regular graph with $n=8$ (Figure \ref{8-5}),  it is easy to see that none of them satisfies this condition.

\begin{figure}
  \centering
  \includegraphics[width=6cm]{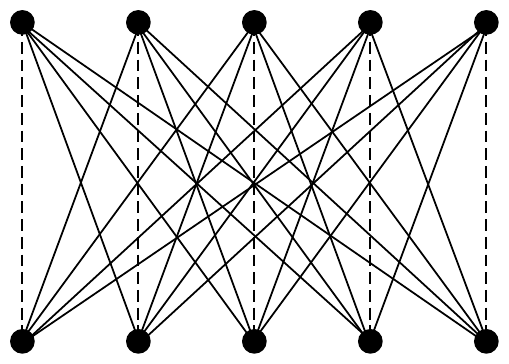}
  \caption{$\dot{S}^2_{10}$}\label{S10-1}
\end{figure}

\begin{figure}[h]
  \centering
  \subfigure{\includegraphics[width=3.3cm]{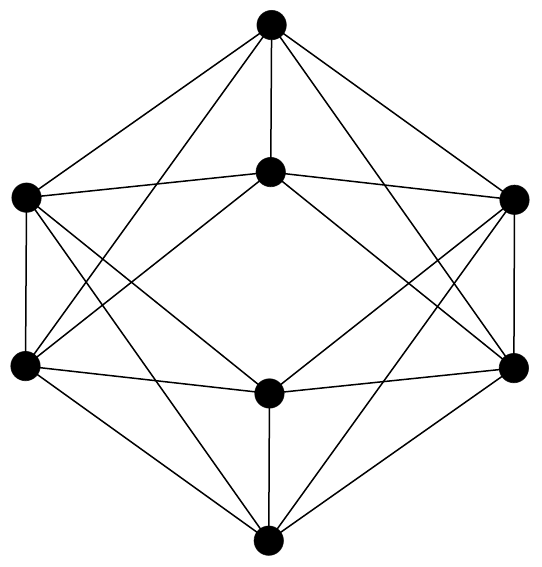}}
  \subfigure{\includegraphics[width=3.3cm]{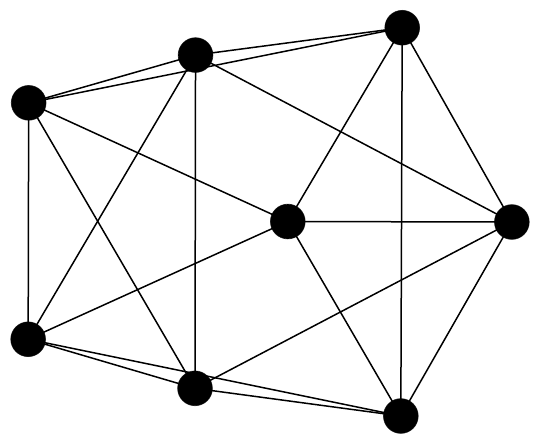}}
  \subfigure{\includegraphics[width=3.3cm]{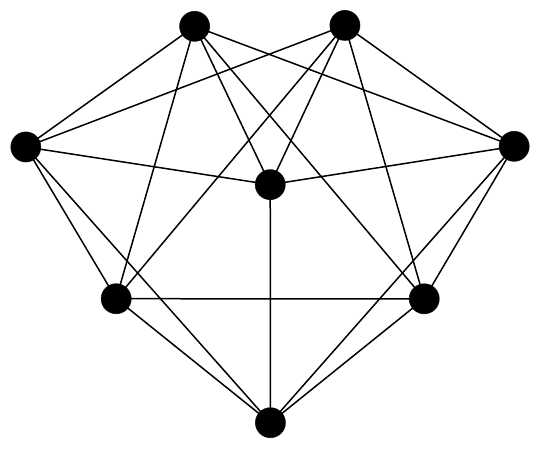}}
  \caption{All 5-regular graphs with order 8}\label{8-5}
\end{figure}
\begin{remark}
The data of the regular graph is obtained from the following website and \cite{M}:

http://www.mathe2.uni-bayreuth.de/markus/reggraphs.html\#CRG
\end{remark}

The underlying graph of  $\dot{G}$ with parameters $(8,5,0,0,2)$ is a strongly regular graph with parameters (8,5,0,2). Such strongly regular graph does not exist since it is contradictory to equation \eqref{srg}.

 If $a=3$, then $G^+=tK_5$ and so we have  $5|n$. This eliminates the parameter sets $(14,5,3,0,-1)$ and $(8,5,3,0,-4)$. For parameters  $(10,5,3,0,-2)$, we have $G^+=2K_5$ by $n=10$. Then we can easily get that the corresponding SRSG $\dot{S}^3_{10}$ (Figure \ref{S10}) by $b=0$ and $c=-2$.

 \begin{figure}[htbp]
  \centering
  \includegraphics[width=7cm]{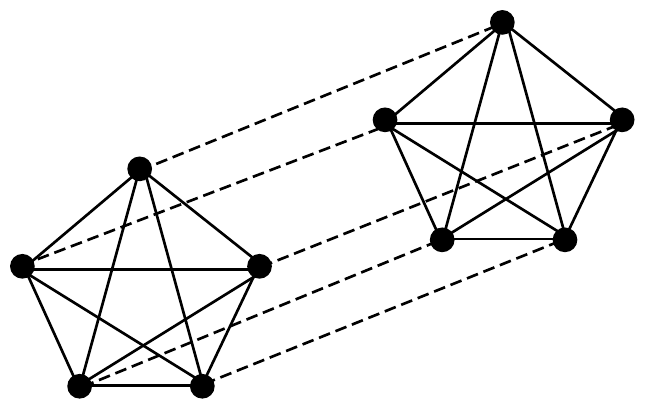}
  \caption{$\dot{S^3_{10}}$}\label{S10}
\end{figure}

There are two cases for $c=0$. The first one is that there are no common neighbours between two non-adjacent vertices. The second one is that there are two positive  and two negative walks of length 2 between two non-adjacent vertices.   Let $\dot{G}$ be a connected non-complete 5-regular and 3 net-regular SRSG with parameters $(n,5,1,0,0)$. Suppose $v_iv_j$ is a negative edge of $\dot{G}$ and $v_k,v_l,v_m,v_s$ are the remaining positive neighbours of $v_i$. Since $a=1$ and $b=0$,  two positively adjacent vertices have only one positive walk of length 2 which consists of two positive edges. So we suppose $v_k\overset{+}{\sim}v_l$ and  $v_m\overset{+}{\sim}v_s$ without loss of generality. Let $v_{k_1},v_{l_1},v_{m_1},v_{s_1}$ be positive neighbours of $v_j$ and suppose that $v_{k_1}\overset{+}{\sim}v_{l_1}$ and  $v_{m_1}\overset{+}{\sim}v_{s_1}$. For two non-adjacent vertices $v_k$ and $v_j$, there are two positive and two negative walks of length 2 between them since they have a common neighbour $v_i$. Without loss of generality, we suppose that $v_k\overset{-}{\sim}v_{k_1}$. Then $v_k\nsim v_{l_1}$ since $b=0$. Therefore, we have $v_k\overset{+}{\sim}v_{m_1}$ and $v_k\overset{+}{\sim}v_{s_1}$. But there are two positive walks of length 2 between $v_{m_1}$ and $v_{s_1}$. A contradiction.

\begin{theorem}
  There are five connected 5-regular and 3 net-regular SRSGs: $\dot{G}_1$,   $\dot{S}^1_{8}$,  $\dot{S}^1_{10}$, $\dot{S}^2_{10}$ and $\dot{S}^3_{10}$.
\end{theorem}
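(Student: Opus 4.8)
The plan is to prove this as an exhaustive classification, partitioning every candidate according to the five classes $\mathcal{C}_1$ through $\mathcal{C}_5$ and then either eliminating each class or pinning down the finitely many survivors. First I would record the structural consequence of the degree data: a connected 5-regular, 3 net-regular signed graph has $G^+$ 4-regular and $G^-$ 1-regular, so $G^-$ is a perfect matching and $n$ is even. This parity constraint is used repeatedly to prune the admissible orders. Next I would dispose of the classes that cannot occur. For $\dot{G}\in\mathcal{C}_2$ or a complete graph in $\mathcal{C}_1$, equation \eqref{def} collapses to $A_{\dot{G}}^2+bA_{\dot{G}}=5I$, which forces the non-integer eigenvalue $-5/3$ and contradicts $b\in\mathbb{Z}$. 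For $\mathcal{C}_3$, I would invoke the known fact that every SRSG in $\mathcal{C}_3$ with degree at most $6$ is complete, so $G\cong K_6$, and a short direct argument identifies $\dot{G}$ as $\dot{G}_1$ with parameters $(6,5,0,4)$.

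The remaining work lives in the non-complete case of $\mathcal{C}_1\cup\mathcal{C}_4\cup\mathcal{C}_5$, where equation \eqref{abcrn} specializes to the identity \eqref{abcn}, namely $-4a-b=c(n-6)-4$. Here I would split on whether $\dot{G}$ contains an unbalanced triangle. If it does, Lemma \ref{UBT-5-3} already forces $\dot{G}$ to be $\dot{S}^1_8$ or $\dot{S}^1_{10}$, so that subcase is settled immediately. If no unbalanced triangle exists, then since $d^-(v)=1$ for every vertex, Lemma \ref{le2} yields $b=0$, and hence $a\in\{0,1,2,3\}$. Substituting $b=0$ into \eqref{abcn} gives $c(6-n)=4(a-1)$, and combining this with the evenness of $n$, 5-regularity, and non-completeness narrows the candidates to the eight explicit parameter tuples listed.

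The heart of the argument — and the part I expect to be the main obstacle — is the term-by-term elimination of these eight tuples, which I would handle by a mixture of three techniques. First, arithmetic and divisibility constraints: $a=3$ forces $G^+=tK_5$ and hence $5\mid n$, killing $(14,5,3,0,-1)$ and $(8,5,3,0,-4)$. Second, non-existence of an admissible underlying graph, checked against the finite catalogue of 5-regular graphs on $8$ and $10$ vertices and, for $(8,5,0,0,2)$, against the strongly regular feasibility relation \eqref{srg}. Third, local walk-counting contradictions, as in the $(n,5,1,0,0)$ case, where tracing the positive neighbourhoods of the two endpoints of a negative edge produces an unavoidable pair of positive walks of length $2$ that violates $a=1$. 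The surviving tuples $(10,5,0,0,1)$ and $(10,5,3,0,-2)$ then admit unique realizations, giving $\dot{S}^2_{10}$ from the underlying graph $K_{5,5}$ and $\dot{S}^3_{10}$ from $G^+=2K_5$. Assembling $\dot{G}_1$, $\dot{S}^1_8$, $\dot{S}^1_{10}$, $\dot{S}^2_{10}$, $\dot{S}^3_{10}$ completes the list. The delicate point throughout is ensuring the eliminations are genuinely exhaustive: that the database search over regular graphs leaves no gaps, and that each claimed construction is the unique signing compatible with its prescribed parameters.
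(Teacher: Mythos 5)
Your proposal follows the paper's own argument essentially step for step: the same class-by-class elimination of $\mathcal{C}_2$, complete $\mathcal{C}_1$, and $\mathcal{C}_3$, the same split on the presence of an unbalanced triangle with Lemma \ref{UBT-5-3} handling one branch, the deduction $b=0$ in the other branch leading to the same eight parameter tuples, and the same three elimination techniques (divisibility via $G^+=tK_5$, the catalogue of 5-regular graphs on 8 and 10 vertices together with \eqref{srg}, and the local walk-counting contradiction for $(n,5,1,0,0)$). The approach and the surviving list $\dot{G}_1$, $\dot{S}^1_8$, $\dot{S}^1_{10}$, $\dot{S}^2_{10}$, $\dot{S}^3_{10}$ match the paper exactly.
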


\subsection{ 1 net-regular SRSGs with degree 5}

Let $\dot{G}$ be a connected 5-regular and 1 net-regular SRSG, then we have $G^+$ is 3-regular and $G^-$ is 2-regular. And the net-degree 1 is an eigenvalue of $\dot{G}$.

If $\dot{G}\in\mathcal{C}_2$, then $a=-b$ and so $A^2_{\dot{G}}+bA_{\dot{G}}=5I$. Therefore, the eigenvalues of $\dot{G}$ are 1 and $-5$. Then $\dot{G}$ is switching equivalent to $-K_6$ by the Theorem 3.1 in \cite{S2}. But this contradicts that $\dot{G}$ is non-complete.

If $\dot{G}$ is complete, then $G^-$ is $C_6$ or $2C_3$.  It is easy to get that there is only one strongly regular signed graph $\dot{G_2}$ (Figure \ref{G2}) which has parameters $(6,5,-4,4)$.

\begin{figure}[h]
  \centering
    \includegraphics[width=4CM]{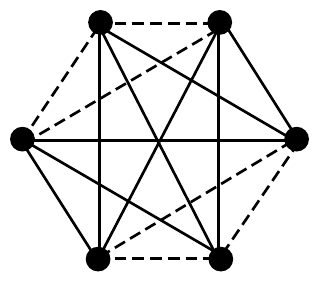}
  \caption{$\dot{G_2}$}\label{G2}
\end{figure}

 The remaining cases are the connected and non-complete net-regular SRSGs in $\mathcal{C} _1\bigcup\mathcal{C}_4\bigcup\mathcal{C}_5$. Firstly, we give the following conditions for parameters $a$ and $b$.

\begin{proposition}\label{5-1}
Let $\dot{G}\in \mathcal{C} _1\bigcup\mathcal{C}_4\bigcup\mathcal{C}_5$ be a connected and non-complete 5-regular and 1 net-regular SRSG. Then we have the following statements:

 (1) $a<3$;

 (2) If $a=0$ ($b=0$, respectively), then  two positively (negatively, respectively) adjacent vertices don't have common neighbours;

 (3) If $a=-1$, then $b\leq0$;

 (4) If $a=-2$, then $b\geq-1$;

 (5) $a>-3$;

 (6) $-2\leq b<3$;

 (7) If there is a balanced triangle with two negative edges, then $b\leq0$;

 (8) If $b=2$ or $-1$, then $a\leq0$;

 (9) If $b=-2$, then $a\geq1$.

\end{proposition}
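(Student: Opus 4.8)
The plan is to translate each of the nine statements into a constraint on the number of length-$2$ walks across a single edge, and then read off the inequalities from the degree restriction $d^+(v)=3$, $d^-(v)=2$ together with Lemma~\ref{le2}. For any edge $v_iv_j$ I would classify each common neighbour $w$ by the signs of the two edges $v_iw$ and $wv_j$: let $p_1,p_2$ count the neighbours reached by two equally signed edges (both positive, both negative) and $q_1,q_2$ those reached by oppositely signed edges. Then the number of positive length-$2$ walks is $p_1+p_2$, the number of negative ones is $q_1+q_2$, and Lemma~\ref{le2} gives $q_1=q_2=:k$. The relevant off-diagonal entry of $A_{\dot G}^2$ is therefore $p_1+p_2-2k$, equal to $a$ on a positive edge and to $b$ on a negative edge. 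Counting the neighbours available at each endpoint gives the crude bounds: on a positive edge each endpoint has two further positive and two negative neighbours, so $p_1,p_2,k\le 2$; on a negative edge each endpoint has three positive but only one further negative neighbour, so $p_1\le 3$, $p_2\le 1$, $k\le 1$. Substituting already yields $-4\le a\le 4$ and $-2\le b\le 4$, so the lower bound $b\ge -2$ of (6) is immediate, and it remains to sharpen the upper end for $a$ and $b$ and to prove the coupling statements.

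For the structural implications (2), (7), (9), (3), (4), (8) I would exploit that a common neighbour of a fixed edge simultaneously forces a length-$2$ walk across an incident edge of the opposite sign, while at the same time occupying neighbourhood slots at the endpoints. For (7), a balanced triangle with two negative edges supplies, for its negative base edge, a $(+,-)$ common neighbour, so $k\ge 1$ and hence $k=1$; a short count of the two endpoints' neighbourhoods then shows the two mismatched common neighbours occupy one positive and one negative slot at each endpoint, forcing $p_2=0$ and $p_1\le 2$, whence $b=p_1-2\le 0$. For (9), the value $b=-2$ forces $p_1=p_2=0$ and $k=1$ on every negative edge, an extremely rigid condition; transporting one of the two mismatched common neighbours onto an incident positive edge produces a positive (negative–negative) walk there, which pushes $a$ upward. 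Statement (2) falls into the same scheme once one shows that $a=0$ forces $k=0$ (and hence $p_1=p_2=0$), the possibility $k\ge 1$ being excluded by tracking the induced triangles. The coupling statements (3), (4) and (8) I would treat likewise, fixing a vertex, examining a positive and a negative edge through it, and watching how the common-neighbour types on one edge reappear as walk contributions on the other, always using that $a$ and $b$ are constant over all edges of the respective sign.

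The main obstacle is eliminating the extreme values left open by the crude bounds, namely $a\in\{3,4\}$, $a\in\{-3,-4\}$ and $b\in\{3,4\}$, which is where the real work lies and which together yield (1), (5) and the upper half of (6). Saturating a bound forces a rigid local pattern. For instance $a=4$ requires $p_1=2$ on every positive edge, which forces a $K_4$ in the $3$-regular graph $G^+$, and then that all four vertices of that $K_4$ share the same two negative neighbours — impossible, since a vertex has negative degree $2$. Similarly $b=4$ forces two negatively adjacent vertices to have identical positive neighbourhoods together with a shared second negative neighbour, again a configuration I expect to collapse against $5$-regularity. The residual cases $a=3$, $a=-3$, $a=-4$ and $b=3$ do not trivialise, and I would eliminate them by propagating the forced adjacencies in the style of Lemma~\ref{UBT-5-3} until the underlying graph is completed or the parity conclusion of Lemma~\ref{le2} is contradicted.

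The delicate point throughout, and the step I expect to be hardest, is controlling the auxiliary mismatched-walk count $k$ on the incident edge one passes to: a single forced matched walk bounds $a$ (or $b$) only after that count is pinned down, since a large $k$ could in principle absorb the gain. The levers that make this controllable are the constancy of $a$ and $b$ over all edges of a given sign and the exact parity statement of Lemma~\ref{le2}, which forces the mismatched neighbours to come in balanced $(+,-)$/$(-,+)$ pairs and thereby ties up the scarce negative-degree slots; keeping careful track of these slots at the two endpoints is what ultimately converts each local count into the stated inequality.
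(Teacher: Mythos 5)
Your bookkeeping framework---classifying the common neighbours of an edge into matched counts $p_1,p_2$ and a mismatched count $k$ (with $q_1=q_2=k$ forced by Lemma~\ref{le2}), reading off $a$ or $b$ as $p_1+p_2-2k$, and bounding everything by the slot counts coming from $d^+(v)=3$ and $d^-(v)=2$---is exactly the engine the paper's proof runs, just with cleaner notation. It correctly disposes of the easy parts: the bound $b\ge-2$, statements (4) and (7), $a\ne4$, and $a\ne-4$ (modulo the completion argument). In fact $a\ne-3$ is immediate in your own setup, since it forces $p_1+p_2$ odd and $2k=p_1+p_2+3$, hence at least five common neighbours where only four slots exist; you unnecessarily list it among the cases that ``do not trivialise.''

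The gap is that everything you defer with ``I would eliminate,'' ``I expect to collapse,'' or ``excluded by tracking the induced triangles'' is precisely where the content of the proposition lives, and your guesses about how those cases die are not always right. Concretely: (i) $b=4$, and the subcase $(p_1,p_2)=(2,1)$ of $b=3$, do not collapse against $5$-regularity as you predict; the paper kills them by propagating forced signs until some \emph{positive} edge carries four negative walks of length $2$, i.e.\ $a=-4$, contradicting part (5)---so the parts must be proved in an order that makes this legitimate. (ii) For (2), the case $a=0$ with $k=1$ (four common neighbours in the pattern $(p_1,p_2)=(1,1)$) and the case $b=0$ with $(p_1,k)=(2,1)$ each require a genuine forced-adjacency analysis (the latter splits into three subcases in the paper); ``tracking the induced triangles'' is not an argument. (iii) For (3), before concluding $b\le0$ you must rule out the configuration in which the single matched common neighbour is joined to both endpoints positively; the paper does this via a parity violation of Lemma~\ref{le2} on an adjacent positive edge. (iv) For (9), the conclusion $a\ge1$ needs $k=0$ on the transported positive edge, which requires first showing the two mismatched common neighbours of the negative edge are not joined by a positive edge and then checking that neither negative neighbour of $v_i$ is a positive neighbour of $v_k$; you explicitly name this as ``the delicate point'' and ``the step I expect to be hardest,'' but you do not carry it out. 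A plan that flags its own missing steps is not yet a proof: each of (i)--(iv), plus the two subcases of $a=3$, is a finite but genuinely nontrivial chain of forced adjacencies that has to be written down.
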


\begin{proof}
  (1) Suppose that $v_iv_j$ is a positive edge of $\dot{G}$. If $a=4$, then $v_i,v_j$ have four common
neighbours. We suppose that $N(v_i)\bigcap N(v_j)=\{v_k,v_l,v_m,v_s\}$  and $v_k\overset{+} {\sim}v_i,v_j$,\; $v_l\overset{+} {\sim}v_i,v_j$,\; $v_m\overset{-} {\sim}v_i,v_j$ and $v_s\overset{-} {\sim}v_i,v_j$. Considering the positive edge $v_iv_k$, we have $v_k\overset{-} {\sim}v_m,v_s$. But this is contradictory to the net-degree of $v_m$ and $v_s$.

  If $a=3$, then there are three positive walks of length 2 between $v_i$ and $v_j$. We suppose that $N(v_i)\bigcap N(v_j)=\{v_k,v_l,v_m\}$  and $v_k\overset{+} {\sim}v_i,v_j$,\; $v_l\overset{+} {\sim}v_i,v_j$,\; $v_m\overset{-} {\sim}v_i,v_j$ or $v_k\overset{+} {\sim}v_i,v_j$,\; $v_l\overset{-} {\sim}v_i,v_j$,\; $v_m\overset{-} {\sim}v_i,v_j$.

  Case 1. $v_k,v_l\overset{+} {\sim}v_i,v_j$,\; $v_m\overset{-} {\sim}v_i,v_j$. Let $v_{s_1}$ and $v_{s_2}$ be the remaining negative neighbours of $v_i$ and $v_j$, respectively. Concern the positive edge $v_iv_k$, then we have $v_k\overset{+} {\sim}v_l$ and $v_k\overset{-} {\sim}v_{s_1}$ since $d^-(v_m)=2$. We also have $v_k\overset{-} {\sim}v_{s_2}$ by considering the positive edge $v_jv_k$. In a similar way, we get that $v_l\overset{-} {\sim}v_{s_1}, v_{s_2}$. But this is contradictory to the net-degree of $v_{s_1}$ and $v_{s_2}$.

  Case 2. $v_k\overset{+} {\sim}v_i,v_j$, $v_m,v_l\overset{-} {\sim}v_i,v_j$.  For the positive edge $v_iv_k$, we must have $v_k\overset{-} {\sim}v_l$ or $v_k\overset{-} {\sim}v_m$. But this is contradictory to the net-degree of $v_l$ or $v_m$.

  (2)  Suppose that $v_iv_j$ is a positive edge. If $a=0$ and  $v_i$ have  common neighbours with $v_j$, then $|N(v_i)\bigcap N(v_j)|=4$. Supposes that $N(v_i)\bigcap N(v_j)=\{v_k,v_l,v_m,v_s\}$ and $v_k\overset{+} {\sim}v_i,v_j$,\; $v_l\overset{-} {\sim}v_i, v_l\overset{+} {\sim}v_j$,\; $v_m\overset{+} {\sim}v_i, v_m\overset{-} {\sim}v_j$,\; $v_s\overset{-} {\sim}v_i,v_j$. Considering the positive edge $v_iv_k$, we have $v_k\overset{-} {\sim}v_l,v_m$ and $v_k\overset{+} {\sim}v_s$. But the situation of walks of length 2 between $v_i$ and $v_m$ contradicts Lemma \ref{le2}. So $N(v_i)\bigcap N(v_j)=\emptyset$ .

  Suppose that $v_iv_j$ is a negative edge.  If $b=0$ and $v_i$ has common neighbours with $v_j$, then $|N(v_i)\bigcap N(v_j)|=4$. We suppose that  $N(v_i)\bigcap N(v_j)=\{v_k,v_l,v_m,v_s\}$ and $v_k\overset{+} {\sim}v_i,v_j$,\; $v_l\overset{+} {\sim}v_i, v_l\overset{-} {\sim}v_j$,\; $v_m\overset{-} {\sim}v_i, v_m\overset{+} {\sim}v_j$,\; $v_s\overset{+} {\sim}v_i,v_j$. There is one negative walk of length 2 between $v_i$ and $v_m$ for the negative edge $v_iv_m$. So we have three possibilities of the remaining negative neighbour of $v_m$: $v_m\overset{-} {\sim}v_l$ or  $v_m\overset{-} {\sim}v_k$ or  $v_m\overset{-} {\sim}v_t$.

   Case 1. $v_m\overset{-} {\sim}v_l$. Then we have $v_m\overset{+} {\sim}v_k,v_s$. For the negative edge $v_jv_l$, there are two negative walks of length 2 between $v_j$ and $v_l$. So we get that $v_l\overset{+} {\sim}v_k,v_s$.  But this is  contradictory to $d^{+}(v_k)$ and $d^{+}(v_s)$.

   Case 2. $v_m\overset{-} {\sim}v_k$. Then we have $v_m\overset{+} {\sim}v_l,v_s$. Consider the negative edge $v_jv_l$, then we get that $v_l\overset{-} {\sim}v_k$ or $v_l\overset{-} {\sim}v_s$. If $v_l\overset{-} {\sim}v_k$,  then $v_l\overset{+} {\sim}v_s$ and so $d^{+}(v_s)=4$. A contradiction. If  $v_l\overset{-} {\sim}v_s$, then $v_l\overset{+} {\sim}v_k$. And we obtain that $v_k\overset{-} {\sim}v_s$ by considering the negative edge $v_kv_m$. But $\dot{G}$ is complete now. A contradiction.

   Case 3. $v_m\overset{-} {\sim}v_s$.   This case is similar as  Case 2. We omit it.

  (3) Suppose that $v_iv_j$  is a  positive edge of $\dot{G}$ and $a=-1$. Then there are two negative walks and one positive walk of length 2 between two positively adjacent vertices. Therefore,  $v_i$ and $v_j$ have three common neighbours, say $v_k,v_l,v_m$, and there are two cases:

  Case 1. $v_k\overset{+} {\sim}v_i$, $v_k\overset{-} {\sim}v_j$, $v_l\overset{-} {\sim}v_i$, $v_l\overset{+} {\sim}v_j$, $v_m\overset{+} {\sim}v_i,v_j$. In this case we have $v_k\overset{-} {\sim}v_m$ by positive edge $v_iv_m$. But the situation of two negative walks of length 2 between $v_i$ and $v_k$ is  contradictory to Lemma \ref{le2}.

  Case 2. $v_k\overset{+} {\sim}v_i$, $v_k\overset{-} {\sim}v_j$, $v_l\overset{-} {\sim}v_i$, $v_l\overset{+} {\sim}v_j$, $v_m\overset{-} {\sim}v_i,v_j$.  There is one negative walk of length 2 between $v_i$ and $v_m$, then they must have another one negative walk of length 2 by Lemma \ref{le2}. So we have $b\leq0$.

  (4) Let $v_iv_j$ be a  positive edge of $\dot{G}$ and $a=-2$. Then there are two negative walks of length 2 between $v_i$ and $v_j$. Suppose that $v_k\in N(v_i)\bigcap N(v_j)$ and $v_k\overset{-} {\sim}v_i$, $v_k\overset{+} {\sim}v_j$. Then there is one positive walk of length 2 between $v_i$ and $v_k$. So $b\geq-1$.

  (5) Let $v_iv_j$ be a  positive edge of $\dot{G}$, then there are at most four negative walks of length 2 between $v_i$ and $v_j$ by the vertex net-degree and Lemma \ref{le2}.  If $a=-4$, then there are four negative walks of length 2 between $v_i$ and $v_j$.  Suppose $N(v_i)\bigcap N(v_j)=\{v_k,v_l,v_m,v_s\}$ and  $v_k,v_l\overset{+} {\sim}v_i$,\; $v_k,v_l\overset{-} {\sim}v_j$,\; $v_m,v_s\overset{-} {\sim}v_i$ and $v_m,v_s\overset{+} {\sim}v_i$. We can get that $\dot{G}$ is complete by considering the positive edges  $v_iv_k,v_iv_l,v_jv_s$ in turn. A contradiction.   And we have $a\neq-3$ by the Lemma \ref{le2}. Then $a>-3$.

  (6) Suppose that $v_iv_j$ is a negative edge of $\dot{G}$. If $b=4$, then there are four positive walks of length 2 between $v_i$ and $v_j$. Suppose $N(v_i)\bigcap N(v_j)=\{v_k,v_l,v_m,v_s\}$ and $v_k\overset{+} {\sim}v_i,v_j$,\; $v_l\overset{+} {\sim}v_i,v_j$,\; $v_m\overset{-} {\sim}v_i,v_j$,\; $v_s\overset{+} {\sim}v_i,v_j$. Considering the negative edge $v_iv_m$, we have $v_m\overset{+} {\sim}v_k, v_l,v_s$. Now, there are two negative walks of length 2 between two  positively adjacent vertices. And by the situation of these two negative walks, we get that there are the other two negative walks of length 2 between two  positively adjacent vertices. Then $a=-4$, which is  contradictory to (5).

  If $b=3$, then  there are three positive walks of length 2 between $v_i$ and $v_j$. Suppose that $N(v_i)\bigcap N(v_j)=\{v_k,v_l,v_m\}$  and $v_k\overset{+} {\sim}v_i,v_j$,\; $v_l\overset{+} {\sim}v_i,v_j$,\; $v_m\overset{+} {\sim}v_i,v_j$ or $v_k\overset{+} {\sim}v_i,v_j$,\; $v_l\overset{+} {\sim}v_i,v_j$,\; $v_m\overset{-} {\sim}v_i,v_j$. Let $v_{s_1},v_{s_2}$ be the remaining neighbours of $v_i$ and $v_j$, respectively.

  For the first case, we have   $v_{s_1}\overset{-} {\sim}v_i$ and $v_{s_2}\overset{-} {\sim}v_j$. Then $v_{s_1}\overset{+} {\sim}v_k,v_l,v_m$ and $v_{s_2}\overset{+} {\sim}v_k,v_l,v_m$ by $b=3$. But we get that $d^+(v_k)=d^+(v_l)=d^+(v_m)=4$. A contradiction.

  For the second case, we have  $v_{s_1}\overset{+} {\sim}v_i$ and $v_{s_2}\overset{+} {\sim}v_j$. And $v_m$ must be positively adjacent to one of $v_k,v_l$ by negative edge $v_iv_m$. Suppose that $v_m\overset{+} {\sim}v_k$ without loss of generality,  then there are two negative walks of length 2 between $v_i$ and $v_k$. But the situation of these two negative walks deduces that  $a=-4$.   This contradicts (5).

  Therefore, we get that $b<3$. And it is obvious that there are at most two negative walks of length 2 between $v_i$ and $v_j$. So $b\geq-2$.

  (7) Suppose that $v_iv_jv_k$ forms a triangle in $\dot{G}$ such that $v_i\overset{+} {\sim}v_j$ and $v_k\overset{-} {\sim}v_i,v_j$. Since there is one negative walk of length 2 between $v_k$ and $v_i$, then there must be another negative walk of length 2 between them by Lemma \ref{le2}. So we have $b\leq0$.

  (8) If $b\in\{2,-1\}$, there must be an unbalanced triangle in $\dot{G}$. Therefore, we obtain that $a\leq0$ by Lemma \ref{le2}.

  (9) Suppose  $v_iv_j$ is a negative edge of $\dot{G}$. If $b=-2$, then there are two negative walks of length 2 between $v_i$ and $v_j$. Let $v_k,v_l$ be two common neighbours of $v_i$ and $v_j$ such that $v_k\overset{+} {\sim}v_i$, $v_k\overset{-} {\sim}v_j$, $v_l\overset{-} {\sim}v_i$ and $v_l\overset{+} {\sim}v_j$. We claim that  $v_k,v_l$ can not be joined by a positive edge. Otherwise there will be one positive walk of length 2 between $v_i$ and $v_l$, which is  contradictory to $b=-2$. So there is no negative walk of length 2 and at least one positive walk of length 2 between $v_i$ and $v_k$. Therefore, $a\geq1$.

\end{proof}

If $\dot{G}\in \mathcal{C} _1\bigcup\mathcal{C}_4\bigcup\mathcal{C}_5$ is a connected and non-complete 5-regular and 1 net-regular SRSG with parameters  $(n,r,a,b,c)$, the possible values of $(a,b)$ are:
$ (2,1)$, $(2,0)$, $(2,-2)$, $(1,1)$, $(1,0)$, $(1,-2)$, $(0,2)$, $(0,1)$, $(0,0)$, $(0,-1)$, $(-1,0)$, $(-1,-1)$, $(-2,2)$, $(-2,1)$, $(-2,0)$, $(-2,-1)$
by the above Lemma. And the equation \eqref{abcrn} will be

\begin{equation}\label{5-1-abcn}
  c(6-n)=4+3a+2b.
\end{equation}

Now, we concern the above possible values of $(a,b)$ one by one.

\begin{lemma}
Let $\dot{G}\in \mathcal{C} _1\bigcup\mathcal{C}_4\bigcup\mathcal{C}_5$ be a connected and non-complete 5-regular and 1 net-regular SRSG. If $(a,b)=(2,1)$, then $\dot{G}$ is the signed graph $\dot{S^1_{12}}$ (Figure \ref{S12}) with parameters $(12,5,2,1,-2)$.
\end{lemma}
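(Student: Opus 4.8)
The plan is to first pin down the admissible orders, then force the positive and negative subgraphs separately, and finally glue them together. Since $\dot{G}$ is $5$-regular with net-degree $1$, every vertex satisfies $d^+=3$ and $d^-=2$, so $G^+$ is $3$-regular and $G^-$ is $2$-regular; in particular $n$ is even (a $3$-regular graph has evenly many vertices). Substituting $(a,b)=(2,1)$ into \eqref{5-1-abcn} gives $c(6-n)=12$, and since $\dot{G}$ is non-complete (so $n\geq 7$ and $6-n<0$) this forces $c<0$ and $(n-6)\mid 12$. Combined with $n$ even, the only possibilities are $(n,c)\in\{(8,-6),(10,-3),(12,-2),(18,-1)\}$.

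Next I would extract the local structure from Lemma \ref{le2}. Two adjacent vertices have at most $4$ common neighbours, while the number of negative length-$2$ walks between them is even, say $2k$. For a positive edge $a=2$ gives $2+4k\leq 4$ common neighbours, and for a negative edge $b=1$ gives $1+4k\leq 4$, so $k=0$ in both cases. Hence no edge carries a negative length-$2$ walk, each positive edge has exactly two common neighbours joined to both ends along positive walks, and each negative edge has exactly one. The decisive observation is that a common neighbour $w$ of a positive edge $v_iv_j$ joined to both ends by \emph{negative} edges would make $v_j$ a common neighbour of the negative edge $v_iw$ that lies on a negative length-$2$ walk, contradicting $k=0$; so both triangles through a positive edge are all-positive. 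Symmetrically, the single triangle through a negative edge cannot have positive legs, so it is all-negative.

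With this in hand the two subgraphs are determined. Every negative edge lies in an all-negative triangle and $G^-$ is $2$-regular, so $G^-$ is a disjoint union of triangles and $3\mid n$. For $G^+$, if $v$ has positive neighbours $x,y,z$, then the requirement that each of $vx,vy,vz$ lie in two all-positive triangles forces $x,y,z$ to be mutually positively adjacent, so $\{v,x,y,z\}$ spans a positive $K_4$ component; thus $G^+=\tfrac{n}{4}K_4$ and $4\mid n$. Therefore $12\mid n$, which together with the list above leaves only $n=12$, with $G^+\cong 3K_4$ and $G^-\cong 4K_3$.

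Finally I would glue the pieces. No edge is simultaneously positive and negative, so two vertices of a common positive $K_4$ cannot share a negative $K_3$ and conversely; counting the $3$ copies of $K_4$, the $4$ copies of $K_3$ and the $12$ vertices then forces each $K_4$ to meet each $K_3$ in exactly one vertex. This incidence is precisely the $3\times 4$ grid, which identifies $\dot{G}$ with $\dot{S}^1_{12}$ up to isomorphism, and a direct evaluation of the entries of $A^2_{\dot{G}}$ confirms the parameters $(12,5,2,1,-2)$. I expect the local-structure step of the second paragraph, namely excluding the wrong-sign common neighbour so that $G^+$ and $G^-$ split into cliques, to be the crux, since every subsequent divisibility and gluing argument rests on it.
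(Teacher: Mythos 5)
Your proposal is correct and follows essentially the same route as the paper: both derive from Lemma \ref{le2} that every triangle through a positive edge is all-positive and the triangle through a negative edge is all-negative, conclude $G^+=pK_4$ and $G^-=qK_3$ so that $12\mid n$, combine this with $c(6-n)=12$ to force $n=12$, and then glue the cliques. The only cosmetic differences are that you inline the walk-parity argument that the paper delegates to Proposition \ref{5-1}, and you finish the gluing by a pigeonhole count of $K_4$--$K_3$ incidences where the paper builds the negative triangles explicitly.
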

\begin{proof}
   Suppose that $(a,b)=(2,1)$ and $v_iv_j$ is a positive edge of $\dot{G}$, then the edges of two positive walks of length 2 between $v_i$ and $v_j$ are positive by Proposition \ref{5-1}. Let $v_k$ and $v_l$ be  two common neighbours of $v_i$ and $v_j$ and then $v_k,v_l\overset{+}{\sim}v_i,v_j$. Suppose $v_m,v_t$ are the remaining neighbours of $v_i$ which are negatively adjacent to $v_i$.  Since  $(a,b)=(2,1)$, the only possibility is that $v_k\overset{+}{\sim}v_l$ and $v_m\overset{-}{\sim}v_t$. So $G^+$ is  isomorphic to $pK_4$ ($p\geq1$) and $G^-$ is  isomorphic to $qK_3$ ($q\geq1$). Therefore, $12|n$.

  Since $a=2$ and $b=1$, we have $c(6-n)=12$ by equation \eqref{5-1-abcn}. So the corresponding parameters sets are $(18,5,2,1,-1)$, $(12,5,2,1,-2)$, $(10,5,2,1,-3)$, $(9,5,2,1,-4)$ and $(8,5,2,1,-6)$. We only need to consider $(12,5,2,1,-2)$ by $12|n$ and then $p=3$ and $q=4$. Suppose vertices $v_1,v_2,v_3,v_4$ form a  $+K_4$ in $\dot{G}$, so do vertices $v_5,v_6,v_7,v_8$ and $v_9,v_{10},v_{11},v_{12}$.  And we suppose $v_1\overset{-}{\sim}v_5$. Then they must have one common neighbour which belongs to  $\{v_9,v_{10},v_{11},v_{12}\}$. Without loss of generality, we suppose $v_9\overset{-}{\sim}v_1,v_5$. By the same way, we have $v_2,v_6,v_{10}$ and $v_3,v_7,v_{11}$ and $v_4,v_8,v_{12}$ form triangles, respectively. Then $\dot{G}$ is isomorphic to the signed graph $\dot{S^1_{12}}$ as shown in Figure \ref{S12}.
\end{proof}

\begin{figure}[h]
  \centering
  \includegraphics[width=8cm]{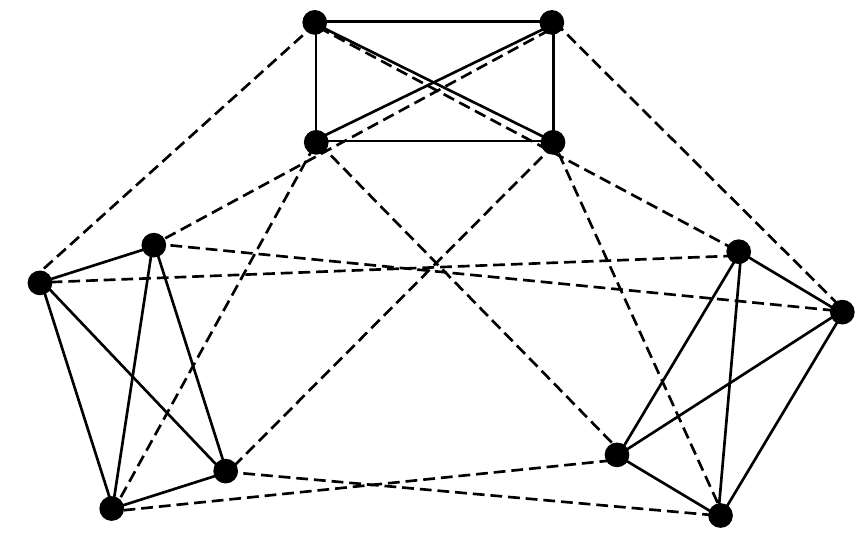}
  \caption{$\dot{S^1_{12}}$}\label{S12}
\end{figure}

\begin{lemma}
Let $\dot{G}\in \mathcal{C} _1\bigcup\mathcal{C}_4\bigcup\mathcal{C}_5$ be a connected and non-complete 5-regular and 1 net-regular SRSG. If $(a,b)=(2,0)$, then $4|n$.
\end{lemma}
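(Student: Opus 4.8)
The plan is to pin down the local structure around each positive edge and show that it forces $G^+$ to be a disjoint union of copies of $K_4$; since $G^+$ spans all $n$ vertices, this immediately gives $n=4p$ and hence $4\mid n$. First I would fix a positive edge $v_iv_j$ and classify each common neighbour $w$ of $v_i$ and $v_j$ by the signs of the edges $wv_i$ and $wv_j$, writing $n_{++},n_{+-},n_{-+},n_{--}$ for the four counts. Because $v_i$ has only four neighbours besides $v_j$, the total satisfies $n_{++}+n_{+-}+n_{-+}+n_{--}\le 4$. The hypothesis $a=2$ says the number of positive length-$2$ walks minus the number of negative ones is $2$, i.e. $(n_{++}+n_{--})-(n_{+-}+n_{-+})=2$. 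By Lemma \ref{le2} the number of negative length-$2$ walks $n_{+-}+n_{-+}$ is even, with $n_{+-}=n_{-+}$; combined with the bound $\le 4$ this forces $n_{+-}=n_{-+}=0$ and $n_{++}+n_{--}=2$. So every positive edge has exactly two common neighbours, each joined to both endpoints by edges of the \emph{same} sign.

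Next I would rule out the $(--)$ type, which is where the value $b=0$ enters. Suppose $w$ were a common neighbour of the positive edge $v_iv_j$ with $w\overset{-}{\sim}v_i$ and $w\overset{-}{\sim}v_j$. Then $v_iw$ is a negative edge, so by Proposition \ref{5-1}(2) (the case $b=0$) the vertices $v_i$ and $w$ have no common neighbour; but $v_j$ is adjacent to $v_i$ (positively) and to $w$ (negatively), so $v_j\in N(v_i)\cap N(w)$, a contradiction. Hence $n_{--}=0$ and $n_{++}=2$: the two common neighbours of every positive edge are joined to both endpoints by positive edges, i.e. every edge of $G^+$ lies in exactly two triangles of $G^+$.

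Finally, $G^+$ is $3$-regular and each of its edges has exactly two common neighbours inside $G^+$. A short propagation argument then makes each component a $K_4$: for an edge $v_iv_j$ with $G^+$-common neighbours $x_1,x_2$, the edge $v_ix_1$ already has $v_j$ as one common neighbour, so its second common neighbour must be the only remaining $G^+$-neighbour $x_2$ of $v_i$, forcing $x_1\overset{+}{\sim}x_2$; thus $\{v_i,v_j,x_1,x_2\}$ spans a $K_4$ whose four vertices each have degree $3$, hence constitute a full component. Therefore $G^+\cong pK_4$, so $n=4p$ and $4\mid n$. The hard part will be the second step: one must exclude the a priori admissible configuration in which a positive edge sits in a balanced triangle with two negative edges, and this is exactly the place where the hypothesis $b=0$ (through Proposition \ref{5-1}(2)) is indispensable, rather than the value of $a$ alone.
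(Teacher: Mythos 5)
Your proposal is correct and follows essentially the same route as the paper's (much terser) proof: show that the two common neighbours of any positive edge are joined to both endpoints by positive edges, conclude that $G^+$ is a disjoint union of copies of $K_4$, and deduce $4\mid n$. Your write-up merely makes explicit the counting via Lemma \ref{le2} and the appeal to Proposition \ref{5-1}(2) that the paper compresses into the phrase ``since $b=0$.''
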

\begin{proof}
Suppose $v_iv_j$ is a positive edge of $\dot{G}$, then all the edges of two positive walks of length 2 between  $v_i$ and $v_j$ are positive since $b=0$. Let $v_k,v_l$ be the common neighbours of $v_i$ and $v_j$, then $v_k\overset{+}{\sim}v_l$. Thus, $G^+$ is the copies of $K_4$ and so $4|n$.
\end{proof}

If $a=2, b=0$, we have $c(6-n)=10$ by equation \eqref{5-1-abcn}. Then the possible parameters sets are $(16,5,2,0,-1)$, $(11,5,2,0,-2)$ and $(8,5,2,0,-5)$. The last one is not true since the $c=-5$ is  contradictory to Lemma \ref{le2} and $r=5$.  And the case of $n=11$ is eliminated by  $4|n$. So we only need to consider $(16,5,2,0,-1)$.

 Since $c=-1$, there are two negative walks and one positive walk of length 2 between two non-adjacent vertices. And the edges of this positive walk of length 2 are negative since $G^+$ is the copies of $K_4$. Let $v_iv_j$ be a positive edge of $\dot{G}$ and $v_k,v_l$ be the common neighbours of $v_i$ and $v_j$. Suppose $v_m,v_s$ are negative neighbours of $v_i$ and $v_{m_1},v_{s_1}$ are negative neighbours of $v_j$. Since vertices $v_i$ and $v_{m_1}$ are non-adjacent and there is one negative walk of length 2 ($v_iv_jv_{m_1}$) between them, we have $v_m\overset{+}{\sim}v_{m_1}$ and $v_s\overset{-}{\sim}v_{m_1}$ (or $v_m\overset{-}{\sim}v_{m_1}$ and $v_s\overset{+}{\sim}v_{m_1}$). In the same way, we have $v_m\overset{-}{\sim}v_{s_1}$ (or $v_s\overset{-}{\sim}v_{s_1}$) for  $v_j$ and $v_m$ (or $v_s$). Now, $v_iv_mv_{s_1}v_jv_{m_1}v_sv_i$ (or  $v_iv_mv_{m_1}v_jv_{s_1}v_sv_i$) is a cycle of length 6. Next, we consider the negative neighbours of $v_k$ and $v_l$. Suppose $v_t,v_w$ are negative neighbours of $v_k$ and $v_{t_1},v_{w_1}$ are negative neighbours of $v_l$. Vertices $v_k$ and $v_{w_1}$ are non-adjacent and there is one negative walk of length 2 ($v_kv_lv_{w_1}$) between them. Therefore, we have $v_w\overset{+}{\sim}v_{w_1}$ and $v_t\overset{-}{\sim}v_{w_1}$ (or $v_w\overset{-}{\sim}v_{w_1}$ and $v_t\overset{+}{\sim}v_{w_1}$). Also, we have $v_w\overset{-}{\sim}v_{t_1}$ (or $v_t\overset{-}{\sim}v_{t_1}$) since  $v_l$ and $v_w$ (or $v_t$) are non-adjacent. Now, $v_kv_wv_{t_1}v_lv_{w_1}v_tv_k$ (or  $v_kv_wv_{w_1}v_lv_{t_1}v_tv_k$) is a cycle of length 6.  Then we have  $6|n$ and so $n\neq16$.

If $a=2$ and $b=-2$, we have $c(6-n)=6$ by equation \eqref{5-1-abcn}. Then the possible parameters sets are $(12,5,2,-2,-1)$, $(9,5,2,-2,-2)$ and $(8,5,2,-2,-3)$. Since there is no 5-regular graph with odd order, we have $n\neq 9$. The underlying graphs of $\dot{G}$ with $(12,5,2,-2,-1)$ and $(8,5,2,-2,-3)$ are strongly regular graphs with parameters $(12,5,2,3)$ and $(8,5,2,5)$, respectively. The first one is not true by equation \eqref{srg}. The second one satisfies  equation \eqref{srg}. But all the 5-regular graphs with order 8 (Figure \ref{8-5}) are not strongly regular graphs. So there is no connected non-complete 5-regular and 1 net-regular SRSG with $a=2$ and $b=-2$.

If $a=b=1$ or $(a,b)=(1,-2)$,  we have $c(6-n)=9$ or $c(6-n)=3$ by equation \eqref{5-1-abcn} and so the order must be odd, which is contradictory to $r=5$.

\begin{lemma}
  If $\dot{G}\in \mathcal{C} _1\bigcup\mathcal{C}_4\bigcup\mathcal{C}_5$ is a connected non-complete 5-regular and 1 net-regular SRSG, then the parameters $(a,b)\neq(1,0)$.
\end{lemma}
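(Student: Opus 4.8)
The plan is to use the net-degree to pin down the local signed structure around each positive edge and then reach a counting contradiction inside $G^+$. Since $\dot{G}$ is $5$-regular and $1$ net-regular, every vertex has $d^+(v)=3$ and $d^-(v)=2$, so $G^+$ is $3$-regular and $G^-$ is $2$-regular. Before any structural work, I would record the quickest obstruction: substituting $(a,b)=(1,0)$ into \eqref{5-1-abcn} gives $c(6-n)=7$, and since $G$ is $5$-regular the order $n$ is even, so $6-n$ is even and $c(6-n)$ cannot equal the odd number $7$. This is precisely the parity argument already used for $(1,1)$ and $(1,-2)$, and it settles the lemma at once. Nonetheless, to stay in the spirit of the surrounding local analysis I would also give a self-contained structural proof, outlined below.

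First I would analyse an arbitrary positive edge $v_iv_j$. By Lemma \ref{le2} the number of negative walks of length $2$ between $v_i$ and $v_j$ is some $2k$, and the mixed common neighbours split as $k$ of type $(+,-)$ and $k$ of type $(-,+)$. Writing $\alpha$ for the number of common neighbours joined to both $v_i$ and $v_j$ by positive edges and $\beta$ for the number joined to both by negative edges, the entry $a=1$ of $A^2_{\dot G}$ gives $\alpha+\beta-2k=1$. The degree constraints $d^+(v_i)=3$ and $d^-(v_i)=2$ yield $\alpha+k\le 2$ and $\beta+k\le 2$, hence $\alpha+\beta\le 4-2k$; combining with $\alpha+\beta=1+2k$ forces $4k\le 3$, so $k=0$ and $\alpha+\beta=1$. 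Thus $v_i$ and $v_j$ have exactly one common neighbour $w$, joined to both either by two positive edges or by two negative edges.

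Next I would discard the all-negative type. If $w\overset{-}{\sim}v_i,v_j$, then the negative edge $v_iw$ has $v_j$ as a common neighbour, contradicting Proposition \ref{5-1}(2), which (since $b=0$) forbids common neighbours of negatively adjacent vertices. Hence $w\overset{+}{\sim}v_i,v_j$, so every positive edge lies in a unique all-positive triangle; equivalently, $G^+$ is a $3$-regular graph in which each edge lies in exactly one triangle. The contradiction then comes from a local count. Fix a vertex $v$ with positive neighbours $x,y,z$, and write $[xy]=1$ if $xy\in E(G^+)$ and $0$ otherwise. The number of triangles through $vx$ equals $[xy]+[xz]$, and similarly for $vy,vz$; requiring each of these edges to lie in exactly one triangle gives $[xy]+[xz]=[xy]+[yz]=[xz]+[yz]=1$, whose sum forces $[xy]+[xz]+[yz]=\tfrac32$, which is impossible.

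I expect the main obstacle to be the middle step, namely pinning down that each positive edge has a single common neighbour of type $(+,+)$: this requires correctly combining the sign-type bookkeeping of Lemma \ref{le2} with the degree bounds $d^+=3$, $d^-=2$ to force $k=0$, and then invoking the $b=0$ consequence of Proposition \ref{5-1}(2) to eliminate the all-negative alternative. Once the all-positive triangle structure on the $3$-regular $G^+$ is established, the concluding count on triangles through a vertex is routine. (Either the parity remark or this structural route yields the claim, so the lemma is secure under both lines of attack.)
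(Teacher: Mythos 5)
Your proof is correct, and your first observation is a genuinely different (and shorter) route than the paper's. The paper rules out $(a,b)=(1,1)$ and $(1,-2)$ by exactly the parity argument you describe --- $c(6-n)$ must be even because a $5$-regular graph has even order, while $4+3a+2b$ is odd --- but for $(1,0)$ it instead gives a separate structural lemma, even though $4+3+0=7$ is equally odd and the same one-line argument applies. Your parity remark therefore subsumes the lemma and exposes a small redundancy in the paper's case analysis. Your backup structural argument is essentially the paper's proof, organized more systematically: the paper picks the unique common neighbour $v_k$ of a positive edge $v_iv_j$, notes the remaining positive neighbour $v_t$ of $v_i$ is adjacent to neither $v_j$ nor $v_k$, and concludes the positive edge $v_iv_t$ has no positive walk of length $2$ (using $b=0$ to exclude a negative-negative walk), contradicting $a=1$; your version first derives $k=0$ and $\alpha+\beta=1$ from Lemma \ref{le2} and the degree bounds, eliminates the $(-,-)$ type via Proposition \ref{5-1}(2), and then kills the resulting ``cubic graph with every edge in exactly one triangle'' by the half-integer count $[xy]+[xz]+[yz]=\tfrac32$. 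Both structural arguments rest on the same obstruction (three positive edges at a vertex cannot be paired into triangles); yours makes the bookkeeping behind the paper's assertion that $v_k$ is \emph{the} common neighbour fully explicit, which is a mild gain in rigor at the cost of length. Either line is sound.
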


\begin{proof}
  If $b=0$, then there is no common neighbour for two negatively adjacent vertices. Suppose $v_iv_j$ is a positive edge, then there is only one positive walk of length 2 between $v_i$ and $v_j$. Let $v_k$ be the common neighbour of  $v_i,v_j$ and $v_t$ be the remaining positive neighbour of $v_i$. Then $v_t\nsim v_j$ and $v_t\nsim v_k$ by $a=1$. Since $b=0$ and $d^+(v_i)=3$, there is no positive walk of length 2 between $v_i$ and $v_t$. This is  contradictory to $a=1$.
\end{proof}

\begin{lemma}
  If $\dot{G}\in \mathcal{C} _1\bigcup\mathcal{C}_4\bigcup\mathcal{C}_5$ is a connected and non-complete 5-regular and 1 net-regular SRSG, then the parameters $(a,b)\neq(0,2), (-2,0)$.
\end{lemma}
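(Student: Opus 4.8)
The plan is to dispose of the two parameter pairs separately, though both arguments run along the same template: from the prescribed value of $a$ or $b$, together with the parity of Lemma \ref{le2} and the trivial bound that two adjacent vertices have at most four common neighbours, I pin down the exact sign pattern of the two length-$2$ walks joining a chosen edge; then I exhibit an edge whose two endpoints share a common neighbour, contradicting Proposition \ref{5-1}(2).

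For $(a,b)=(0,2)$ I would fix a negative edge $v_i\overset{-}{\sim}v_j$ (one exists since $G^-$ is $2$-regular). As $v_i,v_j$ are adjacent they have at most four common neighbours, and $(A^2_{\dot{G}})_{ij}=b=2$ says that the number of positive walks of length $2$ between them exceeds the number of negative ones by $2$. Since the latter is even by Lemma \ref{le2}, within the bound of four common neighbours the only option is two common neighbours, both reached by positive walks. A positive walk $v_iv_lv_j$ forces $\sigma(v_iv_l)=\sigma(v_lv_j)$, so each such common neighbour is joined to $v_i$ and to $v_j$ by two edges of the same sign. Because $d^{-}(v_i)=2$ and one negative edge at $v_i$ already goes to $v_j$, at most one common neighbour can be negatively joined to $v_i$; hence at least one of them, say $v_l$, satisfies $v_l\overset{+}{\sim}v_i,v_j$. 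Then $v_iv_l$ is a positive edge and $v_j\in N(v_i)\cap N(v_l)$, contradicting Proposition \ref{5-1}(2). Thus $(a,b)\neq(0,2)$.

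For $(a,b)=(-2,0)$ I would run the mirror argument. Fix a positive edge $v_i\overset{+}{\sim}v_j$ (one exists since $G^+$ is $3$-regular). From $(A^2_{\dot{G}})_{ij}=a=-2$, Lemma \ref{le2}, and the bound of four common neighbours, I obtain exactly two common neighbours, both reached by negative walks; Lemma \ref{le2} then yields one of them, say $v_k$, with $v_k\overset{+}{\sim}v_i$ and $v_k\overset{-}{\sim}v_j$. Now $v_kv_j$ is a negative edge and $v_i\in N(v_k)\cap N(v_j)$, which contradicts Proposition \ref{5-1}(2) since $b=0$. Hence $(a,b)\neq(-2,0)$.

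I expect the walk-counting deductions themselves to be routine; the one point needing care, and the crux of the argument, is the combination of three ingredients --- the value of $a$ (or $b$), the evenness supplied by Lemma \ref{le2}, and the net-degree restriction $d^{-}=2$ --- to force at least one length-$2$ walk to consist of two equal-sign edges at a single common neighbour, which is precisely the forbidden configuration of Proposition \ref{5-1}(2).
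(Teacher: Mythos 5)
Your proposal is correct and follows essentially the same route as the paper: use the value of $a$ (resp.\ $b$) together with the parity from Lemma \ref{le2} and the bound of four common neighbours to force a length-$2$ walk of two equal-sign edges, then contradict Proposition \ref{5-1}(2). You merely spell out the walk-count and net-degree bookkeeping more explicitly than the paper does.
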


\begin{proof}
  If $(a,b)=(0,2)$, then two positively adjacent vertices don't have common neighbours. And there are two  positive walks of length 2 between two negatively adjacent vertices. Suppose $v_iv_j$ is a negative edge of $\dot{G}$ and $N(v_i)\bigcap N(v_j)=\{v_k,v_l\}$. Then there are two cases: $v_k,v_l\overset{+}{\sim}v_i,v_j$; or  $v_k\overset{+}{\sim}v_i,v_j$ and $v_l\overset{-}{\sim}v_i,v_j$. But these two cases are contradictory to $a=0$.

  The case of $(a,b)= (-2,0)$ is similar. There are two  negative walks of length 2 between two positively adjacent vertices, which is  contradictory to $b=0$.
\end{proof}

\begin{lemma}
  If $\dot{G}\in \mathcal{C} _1\bigcup\mathcal{C}_4\bigcup\mathcal{C}_5$ is a connected non-complete 5-regular and 1 net-regular SRSG with parameters $(n,r,a,b,c)$, then $(a,b)\neq(0,1)$.
\end{lemma}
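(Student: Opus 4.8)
The plan is to first pin down the global parameters and then show that the rigid local structure they impose cannot be realized. Since $\dot{G}$ is $5$-regular and $1$ net-regular, every vertex has $d^+(v)=3$ and $d^-(v)=2$, so $G^-$ is $2$-regular. I would first argue that $G^-$ is a disjoint union of all-negative triangles. Fix $v$ with negative neighbours $n_1,n_2$ and positive neighbours $p_1,p_2,p_3$. Because $a=0$, Proposition \ref{5-1}(2) says $v$ and each $p_i$ have no common neighbour, so no $p_i$ is adjacent to $n_1$. Because $b=1$, the count $p-n=1$ together with Lemma \ref{le2} (so $n$ even) and the degree bound forces the negative edge $vn_1$ to have exactly one common neighbour, reached by a positive walk of length $2$; the only remaining candidate is $n_2$, and matching signs forces $n_1\overset{-}{\sim}n_2$. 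Hence $\{v,n_1,n_2\}$ is a negative triangle and a component of $G^-$, so $3\mid n$; combined with $G^+$ being $3$-regular (so $n$ even) this gives $6\mid n$. Feeding $(a,b)=(0,1)$ into \eqref{5-1-abcn} yields $c(6-n)=6$, so $n-6$ divides $6$ and $n\in\{7,8,9,12\}$; the only one divisible by $6$ is $n=12$, forcing $c=-1$.

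Next I would exploit the resulting structure. Writing $T_1,\dots,T_4$ for the four negative triangles, the condition $a=0$ shows the three positive-neighbour sets of the vertices of any $T_i$ are pairwise disjoint and exhaust the other nine vertices; hence the positive edges between any two triangles form a perfect matching, and each vertex has exactly one positive neighbour in each other triangle. The requirement $c=-1$ translates, through $A^2_{\dot{G}}$, into the statement that every non-adjacent pair has exactly three common neighbours, formed by one positive and two negative walks of length $2$. A direct census shows the two negative walks are automatic (one common neighbour inside each of the two triangles containing the pair), so the real content is: for each non-adjacent pair lying in triangles $T_i,T_j$, exactly one of the two remaining triangles $T_k,T_l$ carries a ``coincidence'' of the positive matchings.

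Finally I would encode the matchings algebraically. After relabelling so that the three matchings emanating from $T_1$ are identities, the no-common-neighbour condition on a positive edge becomes the statement that the holonomy of the matchings around each of the four triples of triangles is fixed-point-free; since a fixed-point-free permutation of a $3$-set is a $3$-cycle, all relevant matchings are translations of $\mathbb{Z}_3$, and I can record the three free matchings by elements $p,q,s\in\{1,2\}$. The four positive-edge conditions then read $p,q,s\neq 0$ together with $p+s\neq q$, while the three non-adjacency conditions read $p\neq q$, $q\neq s$ and $p+s\neq 0$. But $p\neq q$ and $q\neq s$ force $p=s$, whence $p+s=2p\equiv -p\pmod 3$ equals the unique value of $\{1,2\}$ different from $p$, which is exactly $q$; this contradicts $p+s\neq q$. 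Therefore no such $\dot{G}$ exists and $(a,b)\neq(0,1)$. The main obstacle is the bookkeeping in the second and third steps: checking that every strong-regularity constraint collapses to a condition inside $\mathbb{Z}_3$, and in particular recognizing that the derangements of a $3$-element set are precisely the translations, which is what reduces the problem to the incompatible linear relations above.
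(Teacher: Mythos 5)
Your argument is correct, and through the main reduction it coincides with the paper's: both proofs show that $b=1$ together with $a=0$ forces $G^-$ to split into all-negative triangles, use equation \eqref{5-1-abcn} with $3\mid n$ and parity to pin down the parameters to $(12,5,0,1,-1)$ with $G^-=4K_3$, and both normalize the three positive matchings leaving one triangle to be identities. The genuine difference is the endgame. The paper finishes by making successive ``without loss of generality'' choices for the remaining matchings ($v_5\overset{+}{\sim}v_7$, then $v_6\overset{+}{\sim}v_{10}$, and so on) and chasing adjacencies until a positively adjacent pair acquires a common neighbour; you instead note that the $a=0$ conditions involving $T_1$ force the three remaining matchings to be derangements of a $3$-set, hence translations $+p,+q,+s$ of $\mathbb{Z}_3$, after which every remaining strong-regularity constraint collapses to one of $p\neq q$, $q\neq s$, $p+s\neq q$, $p+s\neq 0$, and these are unsatisfiable over $\{1,2\}$ because $p\neq q$ and $q\neq s$ give $p=s$, whence $p+s=2p\equiv-p$ is exactly $q$. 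Your encoding buys a symmetric, choice-free contradiction that exposes the obstruction as a holonomy condition mod $3$ and removes the burden of justifying the paper's cascading WLOG selections; the cost is the up-front bookkeeping verifying that all twelve edge and non-adjacency constraints really do reduce to those four relations (I checked that they do). Two small points to fix in a final write-up: you overload the symbols $n$ and $p$ (graph order versus walk counts versus translation parameters) within single sentences, and you should state explicitly that the derangement/translation identification uses the labelling of each $T_i$ induced by the identity matchings from $T_1$.
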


\begin{proof}
  If $(a,b)=(0,1)$, then two positively adjacent vertices don't have common neighbours. And there is one  positive walk of length 2 between two negatively adjacent vertices. Suppose $v_iv_j$ is a negative edge of $\dot{G}$ and $N(v_i)\bigcap N(v_j)=\{v_k\}$. Then $v_k\overset{-}{\sim}v_i,v_j$ by $a=0$. Therefore, $G^-$ is the copies of $K_3$ and so $3|n$. Then we get the possible parameters sets are $(12,5,0,1,-1)$, $(9,5,0,1,-2)$ and $(8,5,0,1,-3)$ by  $(a,b)=(0,1)$ and equation \eqref{5-1-abcn}. The last one is not true by $3|n$. And since $a=0$, the positive neighbours of $v_i,v_j,v_k$ don't coincide, which deduces that $n\geq 12$. So we only need to consider $(12,5,0,1,-1)$. Then $G^-=4K_3$.

  Let $v_1v_2v_3$, $v_4v_5v_6$,  $v_7v_8v_9$ and  $v_{10}v_{11}v_{12}$ be four triangles of $\dot{G}$. Since the vertex positive degree is 3 and $a=0$, we suppose $v_1\overset{+}{\sim}v_4,v_7,v_{10}$,\;  $v_2\overset{+}{\sim}v_5,v_8,v_{11}$ and $v_3\overset{+}{\sim}v_6,v_9,v_{12}$ without loss of generality. Then $v_1\nsim v_5,v_6,v_8,v_9,v_{11},v_{12}$. For vertices $v_1$ and $v_5$, $v_5$ is positively adjacent to one of $\{v_7,v_{10}\}$ by $c=-1$ and $a=0$. We suppose that $v_5\overset{+}{\sim}v_7$ without loss of generality. Then we have $v_6\overset{+}{\sim}v_{10}$ for non-adjacent vertices $v_1$ and $v_6$. By the same way, we suppose $v_8\overset{+}{\sim}v_{4}$, $v_9\overset{+}{\sim}v_{10}$, $v_{11}\overset{+}{\sim}v_4$ and $v_{12}\overset{+}{\sim}v_7$.

  The non-adjacent vertices $v_2$ and $v_6$ also need one positive walks of length 2. So $v_6$ is positively adjacent to one of $v_8,v_{11}$. But this contradicts $a=0$ since $v_8,v_{11}\overset{+}{\sim}v_4$ and $v_4\overset{-}{\sim}v_6$.

  \end{proof}

For the cases of $(a,b)=(0,0), (0,-1), (-1,0), (-1,-1), (-2,2)$, the possible parameters sets are $(7,5,0,0,-4)$, $(8,5,0,0,-2)$, $(10,5,0,0,-1)$, $(8,5,0,-1,-1)$, $(7,5,0,-1,-2)$, $(7,5,-1,0,-1)$,   $(7,5,-1,-1,1)$,  $(8,5,-2,2,-1)$, $(7,5,-2,2,-2)$ by equation \eqref{5-1-abcn}. Since $n$ cannot be odd, we need to consider the parameters $(8,5,0,0,-2)$, $(10,5,0,0,-1)$, $(8,5,0,-1,-1)$ and $(8,5,-2,2,-1)$.  There are three 5-regular graphs of $n=8$ (Figure \ref{8-5}) and every pair of adjacent vertices in these three graphs have common neighbours, which is  contradictory to $a=0$. So  the sets of $(8,5,0,0,-2)$ and  $(8,5,0,-1,-1)$ are not true. There are sixty 5-regular graphs of $n=10$ (Figure \ref{10-5}).  Only the complete bipartite graph $K_{5,5}$ satisfies that two adjacent vertices don't have common neighbours. If $K_{5,5}$ is the underlying graph of the SRSG with parameters $(10,5,0,0,-1)$,  then there will be two positive and three negative walks of length 2 between two non-adjacent vertices. This is impossible by Lemma \ref{le2}. The underlying graph of a SRSG  with parameters $(8,5,-2,2,-1)$ is a strongly regular graph with parameters $(8,5,2,3)$. But there is no such strongly regular graph since it is  contradictory to equation \eqref{srg}.

\begin{lemma}
  If $\dot{G}\in \mathcal{C} _1\bigcup\mathcal{C}_4\bigcup\mathcal{C}_5$ is a connected and non-complete 5-regular and 1 net-regular SRSG, then the parameters $(a,b)\neq(-2,1), (-2,-1)$.
\end{lemma}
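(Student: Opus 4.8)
The plan is to treat the two parameter pairs separately, in each case first pinning down the admissible orders from equation \eqref{5-1-abcn} and then ruling them out by the same local sign-forcing used in the preceding lemmas, falling back on the finite census of small regular graphs where the underlying graph fails to be strongly regular. For $(a,b)=(-2,1)$ equation \eqref{5-1-abcn} gives $c(6-n)=0$; since $\dot G$ is $5$-regular its order is even and, being non-complete, $n\ge 8$, so $6-n\ne 0$ and $c=0$. For $(a,b)=(-2,-1)$ the same equation gives $c(n-6)=4$, and with $n$ even and $n\ge 8$ this leaves only $(n,c)\in\{(8,2),(10,1)\}$, i.e. the parameter sets $(8,5,-2,-1,2)$ and $(10,5,-2,-1,1)$.

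Next I would extract the local structure forced by $a=-2$. For any positive edge $v_iv_j$ the net number of length-$2$ walks is $-2$; since the number of negative such walks is even (Lemma \ref{le2}) and the endpoints share at most four neighbours, the only possibility is exactly two common neighbours, one joined to $v_i$ by a positive edge and to $v_j$ by a negative edge and one the other way round, with no positive walk of length $2$. I would then layer on the information from $b$. When $b=1$ a negative edge has net count $+1$, which (again via Lemma \ref{le2} and the degree bound) forces a unique common neighbour positively adjacent to both endpoints, producing an unbalanced triangle $v_iv_jv_k$ with $v_i\overset{-}{\sim}v_j$, $v_k\overset{+}{\sim}v_i,v_j$. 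Propagating the forced signs around $v_k$ by applying $a=-2$ to the positive edges $v_iv_k$ and $v_jv_k$, exhausting $d^+(v_k)=3$ and $d^-(v_k)=2$, and then invoking $c=0$ (so a non-adjacent pair has either no common neighbour or two positive and two negative walks of length $2$), the forcing should collapse either to a completed neighbourhood or to a vertex violating the net-degree $1$ condition, as in the earlier lemmas, eliminating $(a,b)=(-2,1)$.

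For $(a,b)=(-2,-1)$ the key observation is that a negative edge carries net count $-1$, hence exactly three common neighbours: two of mixed sign (the two negative walks guaranteed by Lemma \ref{le2}) and one of equal sign (the single positive walk). Consequently adjacent vertices have two common neighbours across a positive edge but three across a negative edge, so the underlying graph is \emph{not} strongly regular and equation \eqref{srg} does not apply. Instead I would combine the forced local picture with the cycle decomposition of the $2$-regular $G^-$ and the structure of the $3$-regular $G^+$, and for the two admissible orders fall back on the explicit census of $5$-regular graphs: for $n=8$ the three graphs of Figure \ref{8-5}, where $c=2$ forces every non-adjacent pair to have exactly two common neighbours both lying on positive walks; and for $n=10$ the sixty graphs of Figure \ref{10-5}. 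In each case the common-neighbour count dictated by $c$ must match a candidate underlying graph simultaneously for positive and for negative edges, which over-determines the signing and yields a contradiction.

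The main obstacle will be the case $(10,5,-2,-1,1)$: the underlying graph ranges over sixty possibilities and is not strongly regular, so there is no clean parameter identity to exclude it, and the admissible configuration for a non-adjacent pair ($c=1$) is not unique, since $1+4k\le 5$ permits either one common neighbour or five. The delicate step is therefore to push the sign-forcing far enough—tracking the cycle structure of $G^-$ and the edges of $G^+$ around an unbalanced triangle—so that the finite search collapses to a contradiction without an exhaustive hand check of all sixty graphs.
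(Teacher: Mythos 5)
Your proposal is a plan rather than a proof, and it leaves both cases genuinely open. For $(a,b)=(-2,1)$ you correctly derive $c=0$ and the local picture forced by $a=-2$ (exactly two common neighbours across a positive edge, of mixed signs, and no positive walk of length $2$), but your starting configuration is not forced: the claim that $b=1$ produces a common neighbour \emph{positively} adjacent to both ends of a negative edge is unjustified, since the single positive walk of length $2$ could consist of two negative edges without violating $d^-=2$. More importantly, the actual elimination is deferred to ``the forcing should collapse\dots as in the earlier lemmas,'' which is not an argument. The paper works instead around a \emph{positive} edge $v_iv_j$: the two forced mixed-sign common neighbours $v_k,v_l$ cannot be joined by a positive edge (else $v_k$ and $v_j$ would have two positive walks of length $2$, contradicting $b=1$), so the positive edge $v_iv_k$ forces $v_m\overset{+}{\sim}v_k$ for the second negative neighbour $v_m$ of $v_i$; the conditions at $v_iv_k$, $v_iv_l$, $v_iv_m$ then pin down all of their common neighbourhoods and leave the last positive neighbour $v_s$ of $v_i$ non-adjacent to $v_k,v_l,v_m$, so the positive edge $v_iv_s$ has no admissible common neighbour, contradicting $a=-2$. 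No use of $c$ or of the order $n$ is needed.

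For $(a,b)=(-2,-1)$ your route through the census of $5$-regular graphs on $8$ and $10$ vertices is both unnecessary and, as you concede, incomplete for the parameter set $(10,5,-2,-1,1)$ --- so as written this case is simply not proved. You already hold the two facts that finish it in one line: $a=-2$ forces \emph{zero} positive walks of length $2$ across any positive edge, while $b=-1$ together with Lemma \ref{le2} gives, for a negative edge $v_iv_j$, a common neighbour $v_k$ with $v_k\overset{+}{\sim}v_i$ and $v_k\overset{-}{\sim}v_j$. Then $v_iv_jv_k$ is a positive walk of length $2$ between the positively adjacent vertices $v_i$ and $v_k$, a contradiction. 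This is exactly the paper's argument, and it makes the bookkeeping of the admissible orders $(8,5,-2,-1,2)$ and $(10,5,-2,-1,1)$ irrelevant.
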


\begin{proof}
  Suppose $v_iv_j$ is an edge of $\dot{G}$ and $\sigma$ is the sign function. If $(a,b)=(-2,1)$ and $\sigma(v_iv_j)=1$, then there are two negative walks of length 2 between $v_i$ and $v_j$ and there is only one  positive walk of length 2 between two negatively adjacent vertices. We suppose $N(v_i)\bigcap N(v_j)=\{v_k,v_l\}$ where $v_k\overset{+}{\sim}v_i,v_k\overset{-}{\sim}v_j$ and $v_l\overset{-}{\sim}v_i,v_l\overset{+}{\sim}v_j$. Then we have $v_k\overset{+}{\nsim}v_l$. Otherwise, there will be two positive walks of length 2 between $v_k$ and $v_j$, which is contradictory to $b=1$. Let $v_m$ and $v_s$ be the remaining neighbours of $v_i$ such that  $v_m\overset{-}{\sim}v_i$ and $v_s\overset{+}{\sim}v_i$. By the positive edge  $v_iv_k$, we have $v_m\overset{+}{\sim}v_k$ since $v_k\overset{+}{\nsim}v_l$. Since the edges $v_iv_k$, $v_iv_l$ and $v_iv_m$ satisfy the $(a,b)=(-2,1)$ now, we have $v_s\nsim v_k,v_l, v_m$. But this is  contradictory to $a=-2$.

   If $(a,b)=(-2,-1)$ and $\sigma(v_iv_j)=-1$, then there are two negative walks and one positive walk of length 2 between $v_i$ and $v_j$.  We suppose that $N(v_i)\bigcap N(v_j)=\{v_k,v_l,v_m\}$  with $v_k\overset{+}{\sim}v_i,v_k\overset{-}{\sim}v_j$, $v_l\overset{-}{\sim}v_i,v_l\overset{+}{\sim}v_j$ and $v_m\overset{+}{\sim}v_i,v_j$. But there is one positive walk of length 2 between  $v_i$ and $v_k$, which also contradicts $a=-2$.
\end{proof}

The following theorem can be obtained by summing up the above contents.

\begin{theorem}
  There are two connected 5-regular and 1 net-regular SRSGs: $\dot{G}_2$ and $\dot{S}^1_{12}$
\end{theorem}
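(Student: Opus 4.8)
The plan is to treat this theorem as a synthesis of the entire subsection, assembling the outcomes of the preceding case distinctions rather than introducing new arguments. First I would invoke Lemma \ref{le1} to reduce to positive net-degree, so that only net-degree $1$ must be examined for a connected $5$-regular, $1$ net-regular SRSG $\dot{G}$; here $G^+$ is $3$-regular and $G^-$ is $2$-regular. I would then dispose of the two structural alternatives already settled at the start of the subsection: if $\dot{G}\in\mathcal{C}_2$ then $a=-b$ forces $A^2_{\dot{G}}+bA_{\dot{G}}=5I$, hence the spectrum $\{1,-5\}$ and switching equivalence to $-K_6$, which is complete and thus excluded; and if $\dot{G}$ itself is complete, inspecting the admissible negative subgraphs $C_6$ and $2C_3$ leaves exactly the signed complete graph $\dot{G}_2$ with parameters $(6,5,-4,4)$.

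The core of the argument is the non-complete case $\dot{G}\in\mathcal{C}_1\cup\mathcal{C}_4\cup\mathcal{C}_5$. Here I would appeal to Proposition \ref{5-1}, whose nine restrictions on $a$ and $b$ cut the admissible pairs $(a,b)$ down to the sixteen listed immediately after its proof. For each such pair I would substitute into equation \eqref{5-1-abcn}, namely $c(6-n)=4+3a+2b$, together with the constraints that $n$ is even (a $5$-regular graph has even order) and that $\dot{G}$ is non-complete, to obtain a finite list of candidate parameter sets $(n,5,a,b,c)$.

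I would then eliminate every candidate but one, pair by pair, using the results already established. The pairs $(1,1)$ and $(1,-2)$ are killed because they force odd order; the pairs $(1,0)$, $(0,2)$, $(-2,0)$, $(0,1)$, $(-2,1)$, $(-2,-1)$ are excluded by their dedicated lemmas; the pairs $(2,0)$, $(2,-2)$ and the bundle $(0,0),(0,-1),(-1,0),(-1,-1),(-2,2)$ are ruled out by the combinatorial and strongly-regular-graph arguments, invoking equation \eqref{srg} and the explicit enumerations of $5$-regular graphs on $8$ and $10$ vertices. The single surviving pair is $(2,1)$, which by its lemma yields exactly $\dot{S}^1_{12}$ with parameters $(12,5,2,1,-2)$. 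Collecting these outcomes, the complete case contributes $\dot{G}_2$, the non-complete case contributes only $\dot{S}^1_{12}$, and $\mathcal{C}_2$ contributes nothing, so these are the only two such signed graphs.

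Since this final statement is pure assembly, I do not expect the theorem itself to present any obstacle; the genuine difficulty lies upstream in the individual lemmas. The most delicate of these is the elimination of the lone surviving parameter set $(16,5,2,0,-1)$ under $(a,b)=(2,0)$, where a cycle-chasing argument that tracks the negative neighbours across two $+K_4$ blocks forces the length-$6$ cycles that yield $6\mid n$ and hence contradict $n=16$; the other demanding point is the explicit reconstruction of $\dot{S}^1_{12}$ from $G^+\cong pK_4$ and $G^-\cong qK_3$ in the $(2,1)$ case.
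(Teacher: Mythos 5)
Your proposal is correct and takes essentially the same route as the paper, which states only that the theorem ``can be obtained by summing up the above contents'': you assemble the $\mathcal{C}_2$ exclusion, the complete case yielding $\dot{G}_2$, Proposition \ref{5-1} with equation \eqref{5-1-abcn} to bound the $(a,b)$ pairs, and the case-by-case eliminations leaving only $(2,1)$ and hence $\dot{S}^1_{12}$, exactly as the paper does. All sixteen pairs are accounted for, so nothing is missing.
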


\vspace*{2mm}

\textbf{ Declaration of competing interest}

The authors declare that they have no conflict of interests.

\vskip 0.6 true cm
{\textbf{Acknowledgments}}

 This research is supported by the National Natural Science Foundation of China (Nos. 12331012,11971164).
\baselineskip=0.25in

\linespread{1.10}

\begin{appendices}
\section{}

\begin{figure}
  \centering
  \subfigure{
  \includegraphics[width=12cm]{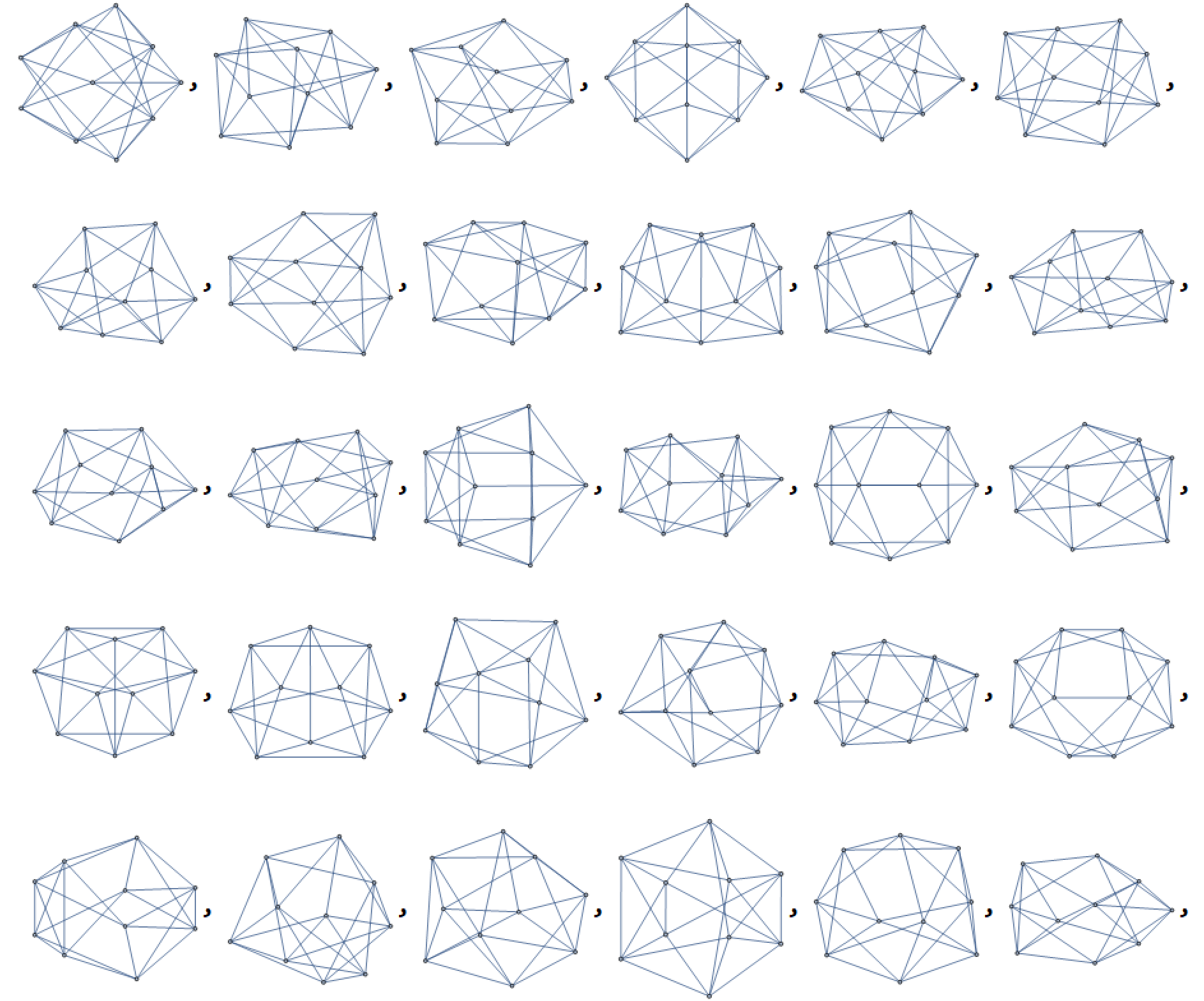}}
  \subfigure{
    \includegraphics[width=12cm]{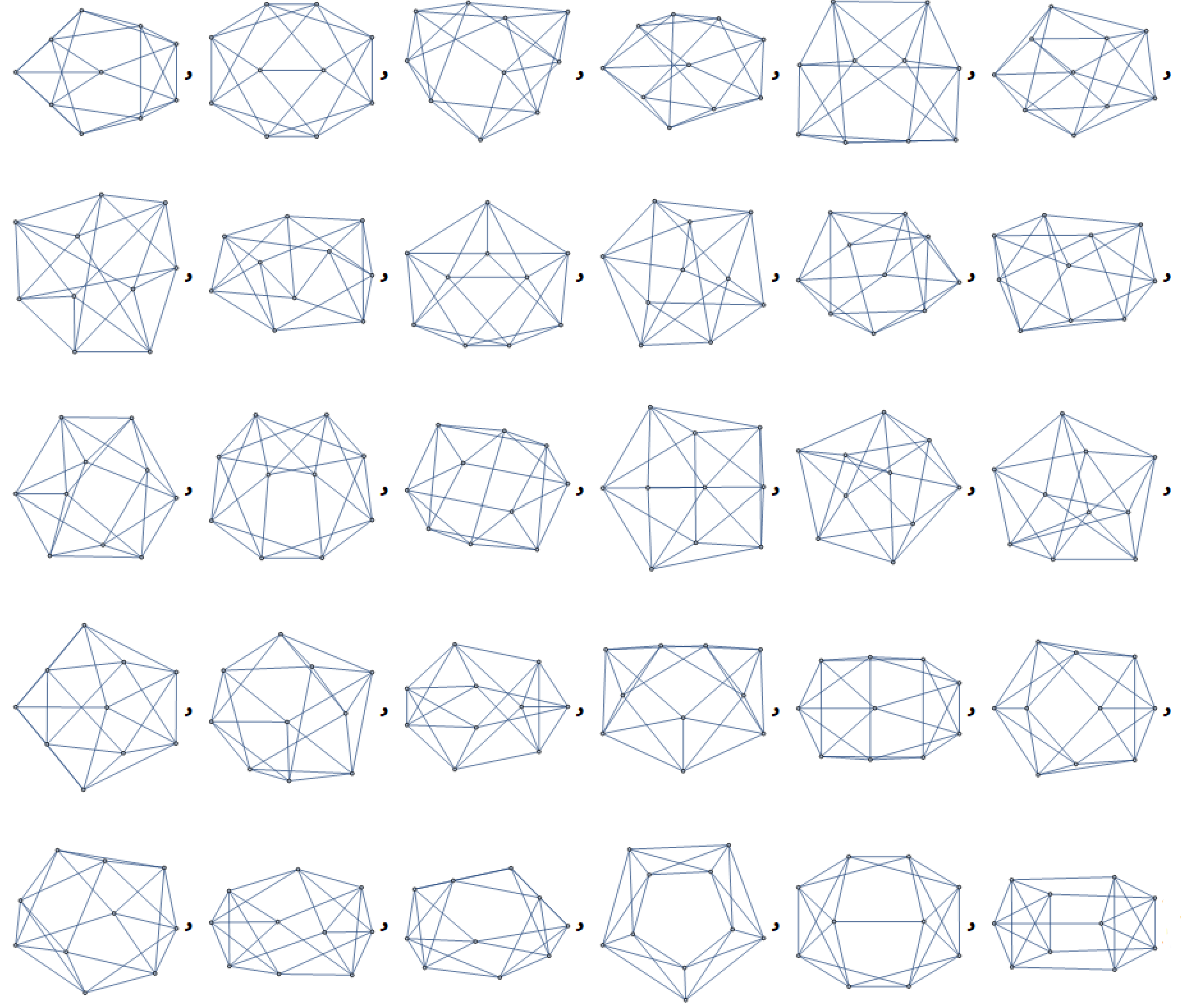}}
  \caption{All 5-regular graphs with order 10}\label{10-5}
\end{figure}

\end{appendices}

\end{document}